\renewcommand\labelenumi{\textup{(\roman{enumi})}}
\renewcommand\theenumi\labelenumi
\renewcommand\labelenumii{(\alph{enumii})}
\renewcommand\theenumii\labelenumii
\renewcommand\theenumii\labelenumii
\theoremstyle{theorem} \newtheorem{theorem}{Theorem}[section]
\theoremstyle{theorem} \newtheorem{lemma}[theorem]{Lemma}
\theoremstyle{theorem} 
\theoremstyle{theorem} 
\theoremstyle{definition} 
\theoremstyle{definition} 
\theoremstyle{definition}  
\theoremstyle{definition}  \newtheorem*{ack}{Acknowledgement}
\DeclareMathOperator \supp {supp}
\DeclareFontFamily{U}{mathx}{\hyphenchar\font45}
\DeclareFontShape{U}{mathx}{m}{n}{
      <5> <6> <7> <8> <9> <10>
      <10.95> <12> <14.4> <17.28> <20.74> <24.88>
      mathx10
      }{}
\DeclareSymbolFont{mathx}{U}{mathx}{m}{n}
\DeclareMathAccent{\widecheck}{0}{mathx}{"71}
\DeclareMathAccent{\wideparen}{0}{mathx}{"75}
\newcommand{\I}{\mathds{1}}
\newcommand{\Ee}{\mathds{E}}
\newcommand{\nat}{\mathds{N}}
\newcommand{\real}{\mathds{R}}
\newcommand{\Scal}{\mathcal{S}}
\newcommand{\Fscr}{\mathcal{F}}
\newcommand\scalp[2]{\langle #1,\,#2\rangle}
\newcommand\lrscalp[2]{\left\langle #1,\,#2\right\rangle}
\newcommand\mc[1] {\mathcal{#1}}
\newcommand\mbb[1] {\mathds{#1}}
\begin{document}

\title{For which functions are $f(X_t)-\Ee f(X_t)$ and $g(X_t)/\Ee g(X_t)$ martingales?}
\author[F.~K\"{u}hn]{Franziska K\"{u}hn}
\address[F.~K\"{u}hn]{TU Dresden, Fakult\"at Mathematik, Institut f\"{u}r Mathematische Stochastik, 01062 Dresden, Germany. E-Mail: \textnormal{franziska.kuehn1@tu-dresden.de}}

\author[R.L.~Schilling]{Ren\'e L.\ Schilling}
\address[R.L.~Schilling]{TU Dresden, Fakult\"at Mathematik, Institut f\"{u}r Mathematische Stochastik, 01062 Dresden, Germany. E-Mail: \textnormal{rene.schilling@tu-dresden.de}}

\subjclass[2020]{60G44; 60G51; 60J65; 39B22; 45E10}
\keywords{L\'evy process; Brownian motion; martingale; polynomial process; convolution equation; Choquet--Deny theorem; Cauchy functional equation; harmonic polynomial.}

\begin{abstract}
    Let $X=(X_t)_{t\geq 0}$ be a one-dimensional L\'evy process such that each $X_t$ has a $C^1_b$-density w.r.t.\ Lebesgue measure and certain polynomial or exponential moments. We characterize all polynomially bounded functions $f:\real\to\real$, and exponentially bounded functions $g:\real\to (0,\infty)$, such that $f(X_t)-\Ee f(X_t)$, resp.\ $g(X_t)/\Ee g(X_t)$, are martingales.
\end{abstract}

\dedicatory{\emph{Accepted for publication in} Theory of Probability and Mathematical Statistics}
\maketitle

In a series of papers \cite{man-tev20,man-tev20b,man-tev21} M.~Mania and R.~Tevzadze studied the interplay of certain Brownian martingales of the form $M_t = f(B_t) - \Ee f(B_t)$ and $N_t = g(B_t)/\Ee g(B_t)$, respectively, and Cauchy's functional equation; in particular, they characterized all functions $f$ and $g$ such that $M$ and $N$ are indeed martingales. Their approach is mainly analytic, based on the solution of a certain `backward' heat equation. In this short note, we provide several alternative, mainly probabilistic, proofs which (i) recover for Brownian motion the results by Mania and Tevzadze, and (ii) extend to the more general class of L\'evy processes.

Our main result can be summarized as follows. Let $(X_t)_{t\geq 0}$ be a one-dimensional L\'evy process with a strictly positive transition density:
\begin{itemize}
\item if the transition density is of class $C^1_b$, if $f$ is polynomially bounded, and if $X_t$ has certain polynomial moments such that $\Ee f(X_t)$ exists, then the process $M_t=f(X_t)-\Ee f(X_t)$ is a martingale if, and only if, $f$ is a.e.\ a second-order polynomial (Theorems~\ref{main-01} and~\ref{main-05}).
\item if $g$ is a positive function which grows at most exponentially, and if $X_t$ has exponential moments such that $\Ee g(X_t)$ exists, then the process $N_t = g(X_t)/\Ee g(X_t)$ is a martingale if, and only if, $g(x) = ae^{\lambda_1 x}+be^{\lambda_2 x}$ a.e.\ for suitable coefficients $\lambda_1,\lambda_2\in \real$ and $a,b\geq 0$. This still holds, if $X_t$ is a one-sided L\'evy process (Theorems~\ref{main-11} and~\ref{main-13})
\end{itemize}
Note that all of these conditions trivially hold for Brownian motion, i.e.\ we recover the results by Mania and Tevzadze.

\section{L\'evy processes}
A stochastic process $X=(X_t)_{t\geq 0}$ with values in $\real$ is a \emph{L\'evy process}, if $t\mapsto X_t$ is right-continuous with finite left-hand limits (c\`adl\`ag), and if its increments are independent and stationary random variables, i.e.\
\begin{gather}\label{intro-e03}
    (X_{t_k}-X_{t_{k-1}})_{k=1,\dots, n} \quad\text{are independent and}\quad X_{t_k}-X_{t_{k-1}}\sim X_{t_k-t_{k-1}}
\end{gather}
for any choice of $n\in\nat$ and $t_0=0\leq t_1\leq \dots\leq t_n<\infty$. Note that this entails $X_0\sim\delta_0$, i.e.\ $X_0=0$ a.e. If $\Fscr_t := \sigma(X_s, s\leq t)$ denotes the natural filtration, we can express \eqref{intro-e03} equivalently in the following form:
\begin{gather}\label{intro-e05}
    \Ee\left(e^{i\xi (X_{t+s}-X_s)} \mid \Fscr_s\right) = e^{-t\psi(\xi)},\quad s,t\geq 0,\;\xi\in\real,
\end{gather}
where $\psi(\xi)$ is the \emph{characteristic exponent} which is given by the L\'evy--Khintchine representation
\begin{gather}\label{intro-e07}
    \psi(\xi) = -ib\xi + \frac 12\sigma^2\xi^2 + \int_{y\neq 0} \left(1-e^{iy\xi}+iy\xi\I_{(0,1)}(|y|)\right)\nu(dy).
\end{gather}
The exponent $\psi$ determines (the finite dimensional distributions of) $X$ uniquely, and $\psi$ is uniquely determined by the \emph{L\'evy triplet} $(b,\sigma^2,\nu)$ comprising the drift $b\in\real$, the diffusion coefficient $\sigma^2\geq 0$ and the L\'evy measure $\nu$, i.e.\ a measure on $\real\setminus\{0\}$ satisfying $\int_{y\neq 0} \min\{y^2,1\}\,\nu(dy)<\infty$. The standard reference for L\'evy processes is Sato \cite{sato}, an introduction is given in \cite{barca}.

Any L\'evy process is a Markov process and the transition semigroup is given for any bounded Borel function $u:\real\to\real$ by
\begin{gather}\label{intro-e09}
    T_t u(x) = \Ee^x u(X_t) = \Ee u(X_t+x);
\end{gather}
notice the convolution structure of $T_t u = \widetilde\mu_t* u$, where $\widetilde\mu_t$ is the law of $-X_t$, which is due to the independent and stationary increment property \eqref{intro-e03}. If $u$ is smooth enough, say if $u$ is in the L.\ Schwartz space of rapidly decreasing, $C^\infty$-functions $u\in\Scal(\real)$, then we can use the Fourier transform to determine the infinitesimal generator of $(X_t)_{t\geq 0}$ or $(T_t)_{t\geq 0}$ as follows: write $\widehat u(\xi) = (2\pi)^{-1} \int e^{-ix\xi} u(x)\,dx$ and $\widecheck u(x) = \int e^{i\xi x} u(\xi)\,d\xi$ for the Fourier and the inverse Fourier transform, respectively. Then
\begin{gather}
    \widehat{Au}
    = \lim_{t\to 0}\frac 1t\left(\widehat{T_tu} - \widehat{u}\right)
    = \lim_{t\to 0}\frac 1t\left(e^{-t\psi}\widehat u - \widehat{u}\right)
    = -\psi\cdot\widehat u,
\end{gather}
i.e.\ $Au = -\psi(D)u = (-\psi\cdot\widehat{u})\widecheck{\phantom{u}}$ is a pseudo-differential operator with symbol $-\psi$. Combining this with the L\'evy--Khintchine formula we easily see that
\begin{gather}\label{intro-e11}
    Au(x)
    = bu'(x) + \frac 12 \sigma^2u''(x) + \int_{y\neq 0}\left[u(x+y)-u(x)-yu'(x)\I_{(0,1)}(|y|)\right]\nu(dy).
\end{gather}
If we consider L\'evy processes and their semigroups in the Banach space $(C_\infty(\real),\|\cdot\|_\infty)$ of continuous functions vanishing at infinity, which is naturally equipped with the uniform convergence, then the (strong) generator $A$ has the domain
\begin{align*}
    \mathcal D(A)
    &= \left\{u\in C_\infty(\real); \|\cdot\|_\infty\text{-}\lim_{t\to0}\tfrac 1t \left(T_tu-u\right)\text{\ \ exists}\right\}\\
    &= \left\{u\in C_\infty(\real); \lim_{t\to 0}\tfrac 1t \left(T_tu(x)-u(x)\right)=g(x)\text{\ \ and\ \ }g\in C_\infty(\real)\right\}.
\end{align*}

We can use this representation to extend $A$ to twice differentiable functions $u\in C_b^2(\real)$, and beyond. Assume that $\int_{|y|\geq 1} |y|^n\,\nu(dy)<\infty$, then it follows from Taylor's formula and \eqref{intro-e11} that $Au(x)$ exists if $u''(x)$ grows at most like $|x|^{n-2}$ as $|x|\to\infty$. In fact, it is possible to show that for $g(x)=|x|^n$
\begin{gather}\label{intro-e13}
    \forall t\::\:\Ee g(X_t) < \infty
    \iff \int_{|y|\geq 1} g(y)\,\nu(dy) < \infty
    \iff \forall \phi \in C_c^\infty(\real)\::\: A^*\phi\in L^1(g(x)\,dx),
\end{gather}
cf.\ e.g.\ \cite[Theorem~3]{ber-kue-schi}. Here, $A^*$ is the (formal) adjoint of the operator $A$. Since $\widehat{A\phi} = -\psi\widehat{\phi}$, it is not hard to see that $A^*$ is the generator of the L\'evy process $(-X_t)_{t\geq 0}$ and that it is a pseudo-differential operator with the conjugate-complex symbol $-\overline{\psi(\xi)}$. Using \eqref{intro-e13} we can extend, in a weak sense, the generator $A$ to the space of polynomially bounded Borel functions $B(\real)$ via
\begin{gather*}
    \scalp{Af}{\phi} := \scalp{f}{A^*\phi},\quad\phi\in C_c^\infty(\real).
\end{gather*}
We will call this the \emph{weakly extended generator}.  Write $g(x)=\text{bp-}\lim_{t\to 0} \frac 1t(T_t f(x) - f(x))$ if the limit exists pointwise for each $x$ and $\sup_x \frac 1t |T_tf(x)-f(x)|  < \infty$ (`bp' stands for `boundedly pointwise'). If $f\in B(\real)$ is polynomially bounded, then $g=Af$ for the weakly extended generator. Indeed, denote by $(T_t^*)_{t \geq 0}$ the semigroup associated with $(-X_t)_{t \geq 0}$. Applying Dynkin's formula, and then Fubini's theorem, yields that
\begin{align} \label{intro-e15} \begin{aligned}
	\lrscalp{\frac{T_tf-f}{t}}{\phi}
	= \lrscalp{f}{\frac{T_t^* \phi-\phi}{t}}
	= \frac{1}{t} \int_0^t \lrscalp{f}{T_s^* A^* \phi} \, ds
	&= \frac{1}{t} \int_0^t \lrscalp{T_s f}{A^* \phi} \, ds \\
	&= \int_\real \frac{1}{t} \int_0^t \mbb{E}f(x+X_s) \, ds \, A^* \phi(x) \, dx.
	\end{aligned}
\end{align}
Note that Fubini's theorem is applicable because of \eqref{intro-e13} and the polynomial boundedness of $f$; the latter implies that
\begin{align*}
	\left| \frac{1}{t} \int_0^t \mbb{E}f(x+X_s) \, ds \right|
	\leq \sup_{s \leq t} |\mbb{E}f(x+X_s)|
	\leq C (1+|x|)^n \sup_{s \leq t} \mbb{E}((1+|X_s|)^n) < \infty,
\end{align*}
see  e.g.\  \cite[Theorem~3]{ber-kue-schi}. Because of the existence of the bp-limit, the mapping $s \mapsto \mbb{E}f(x+X_s) = T_s f(x)$ is continuous at $s=0$, and so
\begin{equation*}
	\lim_{t \to 0} \frac{1}{t} \int_0^t \mbb{E}f(x+X_s) \, ds = f(x).
\end{equation*}
Using the previous estimate and \eqref{intro-e13}, we can let $t \to 0$ in \eqref{intro-e15} using dominated convergence and get $\scalp{g}{\phi} = \scalp{f}{A^* \phi}$.

\section{Some auxiliary results}\label{sec-pre}

In this section we collect a few results which will be used in the proof of the main results.
\begin{lemma}\label{pre-03}
    Let $(X_t)_{t\geq 0}$ be a one-dimensional L\'evy process with characteristic exponent $\psi$ such that $\Ee\left[|X_t|^{n}\right]<\infty$ for some $n\in\nat$. If $p:\real\to\real$ is a polynomial of degree $\deg(p)=n$, then $q(t):=\Ee\left[p(X_t)\right]$ is a polynomial of degree $\deg(q)\leq n$.
\end{lemma}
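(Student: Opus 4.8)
The plan is to reduce the statement to the growth in $t$ of the individual moments and then to exploit the multiplicative structure of the characteristic function. Write $p(x)=\sum_{k=0}^n a_k x^k$ and set $m_k(t):=\Ee[X_t^k]$; these expectations are finite for $0\leq k\leq n$ by the moment hypothesis. By linearity $q(t)=\sum_{k=0}^n a_k m_k(t)$, and since a finite sum of polynomials of degree $\leq n$ is again a polynomial of degree $\leq n$, it suffices to show that each $m_k$, $0\leq k\leq n$, is a polynomial in $t$ of degree at most $k$.

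For this I would use the characteristic function. By \eqref{intro-e05} we have $\Ee[e^{i\xi X_t}]=e^{-t\psi(\xi)}$, and $\psi(0)=0$. Because $\Ee[|X_t|^n]<\infty$, standard Fourier analysis shows that $\xi\mapsto\Ee[e^{i\xi X_t}]$ is $n$-times continuously differentiable and that one may differentiate under the expectation, so that
\[
 m_k(t)=(-i)^k\,\partial_\xi^k e^{-t\psi(\xi)}\big|_{\xi=0}, \qquad 0\leq k\leq n .
\]
Since this characteristic function is continuous, equals $1$ at $\xi=0$, and is therefore non-vanishing in a neighbourhood of the origin, the very same moment assumption forces $\psi=-\tfrac1t\log\Ee[e^{i\xi X_{t}}]$ to be $n$-times continuously differentiable near $0$.

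Now I would expand by Fa\`a di Bruno's formula applied to the composition $\xi\mapsto\exp(-t\psi(\xi))$:
\[
 \partial_\xi^k e^{-t\psi(\xi)}\big|_{\xi=0}
 = e^{-t\psi(0)} \sum_{\pi} \prod_{B\in\pi} \bigl(-t\,\psi^{(|B|)}(0)\bigr),
\]
where the sum runs over all partitions $\pi$ of $\{1,\dots,k\}$ into blocks $B$, and $\psi(0)=0$ makes the prefactor equal to $1$. Each summand is a monomial in $t$ whose degree equals the number of blocks of $\pi$, hence is at most $k$, with the maximal degree $k$ attained only by the partition into singletons, which contributes $(-t\,\psi'(0))^k$. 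Consequently $m_k(t)$ is a polynomial in $t$ of degree $\leq k$, and the claim follows.

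The main technical obstacle is the passage from the moment hypothesis to the two differentiability statements: justifying the differentiation under the expectation sign and, above all, the $C^n$-regularity of $\psi$ near the origin; once these are in place, the degree bookkeeping is routine. Alternatively, one can sidestep all analytic regularity of $\psi$ by an elementary induction on $k$. The independent and stationary increments \eqref{intro-e03} give the binomial recursion
\[
 m_k(t+s)=\sum_{j=0}^k \binom{k}{j}\, m_j(t)\, m_{k-j}(s),
\]
and, assuming $m_0,\dots,m_{k-1}$ are already known to be polynomials of the asserted degrees, the terms with $j=0$ and $j=k$ isolate $m_k(t)+m_k(s)$, while the remaining sum is a known polynomial in $(t,s)$; this reduces $m_k$ to a Cauchy--Pexider-type functional equation whose (measurable) solution must be a polynomial of degree $\leq k$. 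I expect the characteristic-function route to be the shorter of the two, so I would carry it out first and keep the recursion in reserve.
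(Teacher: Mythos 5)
Your main argument is correct and is essentially the paper's own proof: both compute the moments as $\Ee\left[X_t^k\right]=(-i)^k\,\partial_\xi^k e^{-t\psi(\xi)}\big|_{\xi=0}$ and apply Fa\`a di Bruno's formula (yours in set-partition form, the paper's in multi-index form), with $\psi(0)=0$ killing the constant-block contributions so that each term is a monomial in $t$ of degree at most $k\leq n$. Your additional care in deducing the $C^n$-regularity of $\psi$ near the origin from the non-vanishing of the characteristic function addresses a point the paper leaves implicit, and your binomial-recursion alternative, while plausible, is not needed once the main route goes through.
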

\begin{proof}
   It is enough to show the assertion for $p(x)=x^n$. Since $X_t$ has $n$th moments, we can use the characteristic function to see
    \begin{align*}
        q(t)
        = \Ee\left[X_t^n\right]
        = \frac 1{i^n} \partial_\xi^n \Ee\left[e^{i\xi X_t}\right]\Big|_{\xi = 0}
        = \frac 1{i^n} \partial_\xi^n e^{-t\psi(\xi)}\Big|_{\xi = 0}.
    \end{align*}
    An application of Faa di Bruno's formula yields
    \begin{align*}
        \partial_\xi^n e^{-t\psi(\xi)}
        = \sum_{k=1}^n (-t)^k e^{-t\psi(\xi)} \underbracket[.6pt]{\sum_{i_1=0}^k \ldots\ldots\sum_{i_n=0}^k}_{\scriptsize\begin{gathered}i_1+i_2+\dots+i_n=k\\[-4pt] i_1+2i_2+\dots+ni_n=n\end{gathered}} \frac{n!}{i_1!\cdot i_2! \cdots i_n!} \prod_{j=1}^n \left(\frac{\partial^j\psi(\xi)}{j!}\right)^{i_j}.
    \end{align*}
    Since $\psi(0)=0$, we conclude that $\deg(q)\leq n$.
\end{proof}

The key technique in the proof of the main result is the use of Liouville-type results for the generator of certain L\'evy processes. The following theorem is due to one of us \cite[Thm.~1]{franzi-liouville}. We state here only the one-dimensional case, but it holds in any dimension.
\begin{theorem}[Liouville]\label{pre-05}
    Let $(X_t)_{t\geq 0}$ be a one-dimensional L\'evy process such that each $X_t$ has a transition density $p_t\in C_b^1(\real)$ and $\Ee\left[|X_t|^{n+\epsilon}\right]<\infty$ for some $n\in\nat$ and $\epsilon>0$. Denote by $A$ the infinitesimal generator of $(X_t)_{t \geq 0}$. If $u:\real\to\real$ is polynomially bounded such that $|u(x)|\leq c (1+|x|^{n})$ and a weak solution to the equation $Au=0$, then $u$ is a polynomial of degree $\deg(u)\leq n$.
\end{theorem}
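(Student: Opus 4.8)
The plan is to pass to the Fourier side, where the equation $Au=0$ becomes an algebraic condition on the support of $\widehat u$, from which one reads off that $u$ must be a polynomial. First I would observe that a polynomially bounded $u$ is a tempered distribution, $u\in\Scal'(\real)$, and that the weak equation $\scalp{u}{A^*\phi}=0$, a priori imposed only for $\phi\in C_c^\infty(\real)$, extends to all Schwartz test functions $\phi\in\Scal(\real)$. This extension is where the moment assumption $\Ee[|X_t|^{n+\epsilon}]<\infty$ enters: by \eqref{intro-e13} it guarantees that $A^*\phi$ is integrable against $u$, so that the pairing is continuous and $C_c^\infty(\real)$ is dense in the relevant sense, and hence $Au=0$ holds in $\Scal'(\real)$. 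Since $A=-\psi(D)$ is the Fourier multiplier with symbol $-\psi$, the Fourier transform turns this into the identity $\psi\cdot\widehat u=0$ in $\Scal'(\real)$; note that $|\psi(\xi)|\le c(1+\xi^2)$, so multiplication by $\psi$ is meaningful against the finite-order distribution $\widehat u$.

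The second, and decisive, step is to localise the support of $\widehat u$, and for this I would first show that $\{\xi:\psi(\xi)=0\}=\{0\}$. From \eqref{intro-e07} one has $\re\psi(\xi)=\tfrac12\sigma^2\xi^2+\int_{y\neq0}(1-\cos(y\xi))\,\nu(dy)\ge0$, and $\re\psi(\xi_0)=0$ for some $\xi_0\neq0$ would force $\sigma^2=0$ and, since the integrand is nonnegative, $\cos(y\xi_0)=1$ for $\nu$-almost every $y$, i.e.\ $\supp\nu\subseteq\frac{2\pi}{\xi_0}\mbb{Z}$. Then $X_t$ would be supported in a single lattice coset and could not possess a density, contradicting the hypothesis $p_t\in C_b^1(\real)$. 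Hence $\psi\neq0$ away from the origin, and from $\psi\,\widehat u=0$ one concludes $\supp\widehat u\subseteq\{0\}$ by dividing by $\psi$ on each compact set not containing $0$.

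Finally, a tempered distribution supported at a single point is a finite linear combination of derivatives of the Dirac measure, $\widehat u=\sum_{k=0}^m c_k\,\delta_0^{(k)}$; inverting the Fourier transform shows that $u$ agrees almost everywhere with a polynomial $\sum_{k=0}^m a_k x^k$. The growth bound $|u(x)|\le c(1+|x|^n)$ then forces $m\le n$, since a polynomial of higher degree would violate the estimate for large $|x|$, and this is exactly the assertion.

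The hard part will be the regularity bookkeeping in the localisation step. Because only finitely many moments are assumed, $\psi$ is merely $C^n$ rather than $C^\infty$, so the multiplication of the distribution $\widehat u$ by $1/\psi$ — and in particular the associativity $(1/\psi)(\psi\,\widehat u)=\widehat u$ used to annihilate $\widehat u$ near any $\xi_0\neq0$ — has to be matched carefully against the finite order of $\widehat u$ (which is controlled by the degree $n$ of the growth bound). This is precisely where the extra $\epsilon$ in $\Ee[|X_t|^{n+\epsilon}]<\infty$ is spent: it buys the small additional smoothness of $\psi$ needed to carry out the division cleanly and to justify that $\psi\,\widehat u$ is a well-defined tempered distribution.
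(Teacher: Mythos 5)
Your Fourier plan reproduces, almost verbatim, the paper's proof of Lemma~\ref{pre-07} --- but that lemma assumes $\psi\in C^\infty(\real)$, and the paper pointedly does \emph{not} prove Theorem~\ref{pre-05} this way: it quotes it from \cite[Thm.~1]{franzi-liouville}. The smoothness hypothesis is not bookkeeping; it is exactly what makes $\overline\psi\,\widehat\phi\in\Scal(\real)$, hence the identity $\scalp{u}{A^*\phi}=-\scalp{\widecheck u}{\overline\psi\widehat\phi}$ and the products $\psi\cdot\widehat u$ and $(\chi/\psi)\cdot(\psi\widehat u)$ meaningful. Under $\Ee\left[|X_t|^{n+\epsilon}\right]<\infty$ with $\epsilon\in(0,1)$, the exponent $\psi$ is only of class $C^{n}$ (with an $\epsilon$-H\"older top derivative); the growth bound $|\psi(\xi)|\le c(1+\xi^2)$, which you invoke, is irrelevant to this issue, since multiplying a tempered distribution requires a $C^\infty$ multiplier --- or, for a distribution of finite order, a multiplier at least as smooth as that local order. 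From $|u(x)|\le c(1+|x|^n)$ one only gets, for test functions $\phi$ supported in a compact set $K$ with $0\notin K$, the bound $|\scalp{\widecheck u}{\phi}|\le c\int(1+|x|^n)|\widehat\phi(x)|\,dx\le C\max_{k\le n+2}\|\partial^k\phi\|_{L^1}$, i.e.\ local order up to about $n+2$; so the division near $\xi_0\neq 0$ needs $\chi/\psi\in C^{n+2}$, two derivatives more than the hypothesis provides. Your closing hope that the extra $\epsilon$ ``buys the small additional smoothness'' is unfounded: it yields H\"older continuity of $\partial^n\psi$, not existence of $\partial^{n+1}\psi$ or $\partial^{n+2}\psi$ (one would need roughly $\epsilon\ge 2$). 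Dually: to test $\supp\widecheck u\subseteq\{0\}$ you would have to take $\widehat\phi=\chi/\overline\psi$ with $\chi\in C_c^\infty$, but then $\phi$ decays only algebraically, roughly like $|x|^{-n}$, and the pairing $\scalp{u}{A^*\phi}$ is no longer absolutely convergent against $|u(x)|\sim |x|^n$. So the decisive localisation step genuinely fails as proposed; this is a gap, not a technicality.

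A telling symptom is that the hypothesis $p_t\in C_b^1(\real)$ --- the distinctive assumption of Theorem~\ref{pre-05} --- enters your argument only to exclude lattice distributions, for which the mere existence of a density suffices (the paper itself derives $\{\psi=0\}=\{0\}$ from the Riemann--Lebesgue lemma in the proof of Theorem~\ref{main-05}); that part of your proposal, like the extension of the weak equation from $C_c^\infty(\real)$ to $\Scal(\real)$ via \eqref{intro-e13}, is fine. The natural repair --- and, in spirit, the route of the cited proof --- is to use the density as the regularising mechanism: from $Au=0$ weakly one passes to the a.e.\ identity $T_tu=u$, and since $p_t=(p_{t/k})^{*k}$ gains smoothness from $p_{t/k}\in C_b^1(\real)$, convolution produces a smooth version of $u$ without ever dividing a distribution by a finitely differentiable symbol. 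As written, your argument proves Lemma~\ref{pre-07} (smooth $\psi$, all moments finite), not Theorem~\ref{pre-05}.
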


If the characteristic exponent $\psi$ of the L\'evy process $(X_t)_{t \geq 0}$ is smooth, then a  standard result shows $\Ee\left[X_t^{2k}\right]<\infty$ for all $k \in \nat$, cf.\ \cite[\S2.12, Thm.~1, p.~334]{shir} or \cite[Lemma~4]{ber-schi}, and so $u$ can be of arbitrary polynomial growth. We give a slightly different version of Liouville's theorem and present an analytical proof. Recall that $\Scal(\real)$ are the rapidly decreasing, smooth L.~Schwartz functions and their dual space $\Scal'(\real)$ are the tempered distributions.
\begin{lemma}\label{pre-07}
    Let $(X_t)_{t\geq 0}$ be a one-dimensional L\'evy process with characteristic exponent $\psi$. Moreover, assume that $\psi\in C^\infty(\real)$ is smooth and $\xi=0$ is the only zero of $\psi(\xi)$. Then $\Ee\left[|X_t|^n\right]<\infty$ for all $n\in\nat$, and any polynomially bounded measurable function $u:\real\to\real$ satisfying $Au = c$ weakly is a polynomial.
\end{lemma}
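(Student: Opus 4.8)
The plan is to separate the two assertions. The moment bound is essentially classical: since $e^{-t\psi}$ is the characteristic function of $X_t$ and $\psi\in C^\infty(\real)$, the function $\xi\mapsto e^{-t\psi(\xi)}$ is smooth, so all of its derivatives at $\xi=0$ exist; by the cited results (\cite[\S2.12, Thm.~1]{shir}, \cite[Lemma~4]{ber-schi}) this gives $\Ee[X_t^{2k}]<\infty$ for every $k$, and $\Ee[|X_t|^n]<\infty$ then follows from $|x|^n\le 1+x^{2k}$ with $2k\ge n$. Via \eqref{intro-e13} the same input yields $\int_{|y|\ge1}|y|^k\,\nu(dy)<\infty$ for all $k$, and differentiating the L\'evy--Khintchine representation \eqref{intro-e07} under the integral sign then shows that every derivative of $\psi$ is polynomially bounded (quadratic growth for $\psi$, linear for $\psi'$, bounded for $\psi^{(k)}$ with $k\ge2$). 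In particular $\psi$ is a multiplier for both $\Scal(\real)$ and $\Scal'(\real)$, which is what makes the Fourier approach feasible.

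For the Liouville part I would argue in $\Scal'(\real)$. As $u$ is polynomially bounded it is a tempered distribution, and since $A$ is the pseudo-differential operator with symbol $-\psi$ while $A^*\phi\in\Scal(\real)$ for every $\phi\in\Scal(\real)$, the weak identity $\scalp{u}{A^*\phi}=\scalp{c}{\phi}$ (a priori only for $\phi\in C_c^\infty(\real)$) extends by density to all $\phi\in\Scal(\real)$. Hence $Au=c$ holds in $\Scal'(\real)$, and taking Fourier transforms turns this into $-\psi\,\widehat u=\widehat c$. Because $\widehat c$ is a constant multiple of $\delta_0$, the tempered distribution $\psi\,\widehat u$ is supported in $\{0\}$.

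The decisive step is to pass from $\psi\,\widehat u$ to $\widehat u$. On the open set $\real\setminus\{0\}$ the hypothesis that $0$ is the only zero of $\psi$ guarantees that $1/\psi$ is smooth, while $\psi\,\widehat u$ vanishes there (a distribution supported at the origin vanishes off the origin). Multiplying by the smooth function $1/\psi$ gives $\widehat u=0$ on $\real\setminus\{0\}$, so $\widehat u$ is supported in $\{0\}$ as well. By the structure theorem for distributions concentrated at a point, $\widehat u=\sum_{j=0}^N a_j\,\delta_0^{(j)}$ is a finite combination of derivatives of $\delta_0$; inverting the Fourier transform, and using that the inverse transform of $\delta_0^{(j)}$ is a monomial, shows that $u$ agrees with a polynomial of degree at most $N$.

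The main obstacle I expect is not the localization---which is clean precisely because $0$ is the only zero of $\psi$, so no excision of further zeros or regularization is needed---but the bookkeeping behind the first two paragraphs: verifying that $\psi$ really is an $\Scal(\real)$-multiplier and that the weakly extended generator coincides with the distributional operator $\widehat{Au}=-\psi\,\widehat u$, so that ``$Au=c$ weakly'' may legitimately be read as $-\psi\,\widehat u=\widehat c$. This is exactly where the smoothness of $\psi$ and the finiteness of all moments are really used.
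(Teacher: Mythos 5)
Your proof is correct and follows essentially the same route as the paper's: smoothness of $e^{-t\psi}$ for the moment bounds, passing to $\Scal'(\real)$ to obtain $-\psi\,\widehat{u}=\widehat{c}$ (a multiple of $\delta_0$), localization using that $\xi=0$ is the only zero of $\psi$, the structure theorem for distributions supported in $\{0\}$, and Fourier inversion to a polynomial. The only differences are cosmetic: the paper works with $\widecheck{u}$ and the adjoint symbol $-\overline{\psi}$ rather than $\widehat{u}$ and $-\psi$, while you make explicit two steps the paper leaves implicit, namely the extension of the weak identity from $C_c^\infty(\real)$ to $\Scal(\real)$ and the division by the smooth function $1/\psi$ off the origin.
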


It is a classic result that $X_t$ has a [possibly degenerate] lattice distribution if, and only if, $\psi$ has a zero $\xi_0\neq 0$. In this case, $\psi$ is a [degenerate, i.e.\ constant] periodic function, cf.\ Sato \cite[Section 24]{sato} or \cite[Theorem~10]{ber-kue-schi}. The following proof works almost literally for any dimension $d\geq 1$, but we need only the case $d=1$.

\begin{proof}
    By assumption, the characteristic function $\Ee e^{i\xi X_t} = e^{-t\psi(\xi)}$ is of class $C^{2k}(\real)$ for any $k\in\nat$, and so $\Ee\left[X_t^{2k}\right]<\infty$. It is not hard to see with the help of the L\'evy--Khintchine formula, cf.\ \cite[Lemma~4]{ber-schi}, that a smooth L\'evy exponent $\psi$ and all of its derivatives $\partial^n\psi$ are polynomially bounded. In particular, $\overline{\psi}\cdot\phi$ is for every Schwartz function $\phi\in\Scal(\real)$ again a Schwartz function.

    Since $u$ is polynomially bounded, the inverse Fourier transform $\widecheck u$ is well-defined as a tempered distribution. For any $\phi\in\Scal(\real)$ we have
    \begin{gather*}
        \scalp{Au}{\phi}
        = \scalp{u}{A^*\phi}
        = -\scalp{u}{\overline{\psi}(D)\phi}
        = -\scalp{\widecheck u}{\overline\psi\widehat\phi}.
    \end{gather*}
    In the last equality we use Plancherel's theorem for the canonical dual pairing of $\Scal'(\real)$ and $\Scal(\mbb{R})$; here we need that $\overline\psi\cdot\phi\in\Scal(\real)$. On the other hand, $Au = c$ weakly gives
    \begin{gather*}
        -\scalp{\widecheck u}{\overline\psi\widehat\phi}
        = \scalp{Au}{\phi}
        = \int_\real c \phi(x)\,dx
        = 2\pi c \widehat\phi(0)
        = \scalp{2\pi c\delta_0}{\widehat\phi}.
    \end{gather*}
    Since this holds for all $\phi\in\Scal(\real)$, and since the Fourier transform is a bijection on $\Scal(\real)$, we conclude from this that $\widecheck u \overline\psi = - 2\pi c \delta_0$. Now we can use the fact that $\xi=0$ is the only zero of $\psi$. Therefore, $\widecheck u$ has support $\supp u\subset\{0\}$, and a standard structural result for distributions supported in $\{0\}$, see e.g.\ Rudin \cite[Thm.~6.25]{rudin-fa}, shows that
    \begin{gather*}
        \widecheck u = \sum_{k=0}^N c_k\partial^k\delta_0
    \end{gather*}
    for some $N\in\nat$. From this we get by Fourier inversion that $u(x) = (2\pi)^{-1}\sum_{k=0}^N c_k (-i)^k x^k$.
\end{proof}

The following theorem shows that the polynomial $u$ appearing in Theorem~\ref{pre-05} or Lemma~\ref{pre-07} has at most degree $2$.
\begin{lemma}\label{pre-09}
    Let $(X_t)_{t\geq 0}$ be a one-dimensional L\'evy process such that $\Ee\left[|X_t|^{n_0}\right]<\infty$ for some $n_0\in\nat$. Assume that $(X_t)_{t \geq 0}$ is non-trivial, i.e.\ $X_t \not\equiv 0$. If $p$ is a polynomial of degree $\deg(p)\leq n_0$ such that $Ap \equiv\text{const.}$, then $\deg(p)\leq 2$.
\end{lemma}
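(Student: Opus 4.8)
The plan is to argue by contradiction: set $n := \deg(p)$ and suppose $n \ge 3$, aiming to show that this already forces $X_t \equiv 0$. Since $n \le n_0$ and the moment assumption gives $\int_{|y| \ge 1} |y|^{n_0}\,\nu(dy) < \infty$ by \eqref{intro-e13}, the generator formula \eqref{intro-e11} applies to every monomial $x^k$ with $k \le n$, so $Ap$ is itself a genuine polynomial in $x$. The whole argument then reduces to reading off the coefficients of the two top-degree terms of $Ap$ and comparing them with the constant value of $Ap$.

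Concretely, first I would compute $A x^m$ for a single monomial. Plugging $x^m$ into \eqref{intro-e11} and using the binomial expansion of $(x+y)^m$, the compensating term $-myx^{m-1}\I_{(0,1)}(|y|)$ exactly removes the $\{|y|<1\}$ part of the $j=1$ jump contribution, and one obtains
\begin{gather*}
    A x^m = m\kappa_1 x^{m-1} + \binom{m}{2}\kappa_2 x^{m-2} + \sum_{j=3}^m \binom{m}{j}\kappa_j x^{m-j},
\end{gather*}
where $\kappa_1 = b + \int_{|y|\ge 1} y\,\nu(dy) = \Ee X_1$, $\kappa_2 = \sigma^2 + \int_{y\ne 0} y^2\,\nu(dy) = \var X_1$, and $\kappa_j = \int_{y \ne 0} y^j\,\nu(dy)$ for $j \ge 3$ (these integrals converge because $j \le n \le n_0$). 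In particular $A x^m$ has degree $m-1$ with leading coefficient $m\kappa_1$. Writing $p(x) = \sum_{k=0}^n a_k x^k$ with $a_n \ne 0$, I would then collect coefficients: the coefficient of $x^{n-1}$ in $Ap$ comes only from $A x^n$ and equals $n a_n \kappa_1$, while the coefficient of $x^{n-2}$ equals $\binom{n}{2} a_n \kappa_2 + (n-1)a_{n-1}\kappa_1$.

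Finally, since $n \ge 3$ both $x^{n-1}$ and $x^{n-2}$ are non-constant monomials, so $Ap \equiv \text{const.}$ forces their coefficients to vanish. From $n a_n \kappa_1 = 0$ and $a_n \ne 0$ I get $\kappa_1 = 0$; feeding this into the second coefficient gives $\binom{n}{2} a_n \kappa_2 = 0$, hence $\kappa_2 = 0$. Thus $\Ee X_1 = \var X_1 = 0$, so $X_1 = 0$ a.s., and by the stationary independent increments (equivalently $\var X_t = t\kappa_2 = 0$ and $\Ee X_t = t\kappa_1 = 0$ for every $t$) we conclude $X_t \equiv 0$, contradicting non-triviality. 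Hence $n \le 2$. The only real obstacle is bookkeeping: verifying the monomial formula for $A x^m$—in particular the cancellation of the compensator and the identification of $\kappa_1,\kappa_2$ with $\Ee X_1, \var X_1$—and making sure the two relevant coefficients are genuinely constrained, which is exactly why the hypothesis $n \ge 3$, rather than $n \ge 2$, is what is needed to reach the contradiction.
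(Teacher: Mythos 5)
Your proof is correct and follows essentially the same route as the paper: compute $Ax^m$ from the integro-differential form \eqref{intro-e11}, compare the coefficients of $x^{n-1}$ and $x^{n-2}$ in $Ap$, and conclude that the process must be trivial. The only cosmetic differences are that you phrase the conclusion probabilistically via $\Ee X_1=\kappa_1=0$ and $\var X_1=\kappa_2=0$ (forcing $X_1=0$ a.s.) instead of reading off $(b,\sigma^2,\nu)=(0,0,0)$ from the triplet, and that you explicitly keep the cross term $(n-1)a_{n-1}\kappa_1$ in the $x^{n-2}$ coefficient, which the paper's displayed formula suppresses but which is harmless in both arguments since $\kappa_1=0$ is deduced first.
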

\begin{proof}
    Let $(b,\sigma^2,\nu)$ be the L\'evy triplet of the characteristic exponent $\psi$. Since $\Ee\left[|X_t|^{n_0}\right]<\infty$ entails that $\int_{|y|\geq 1} |y|^{n_0}\,\nu(dy)<\infty$, see e.g.\ \cite[Thm.~3]{ber-kue-schi}, we can use the integro-differential form of the generator
    \begin{gather*}
        Au(x)
        = bu'(x) + \frac 12 \sigma^2u''(x) + \int_{y\neq 0}\left[u(x+y)-u(x)-yu'(x)\I_{(0,1)}(|y|)\right]\nu(dy)
    \end{gather*}
    to extend $A$ to all polynomials of degree less or equal than $n_0$. For $n\leq n_0$ we get
    \begin{align*}
        Ax^n
        &= b nx^{n-1} + \frac 12\sigma^2 n(n-1)x^{n-2}
        + \sum_{k=2}^n \binom nk x^{n-k}\int_{|y|<1} y^k\,\nu(dy)
        + \sum_{k=1}^n \binom nk x^{n-k}\int_{|y|\geq 1} y^k\,\nu(dy)\\
        &= (b+M_1) nx^{n-1} + \binom n2\left(\sigma^2+m_2+M_2 \right) x^{n-2}
        + \sum_{k=3}^n \binom nk \left(m_k+M_k\right) x^{n-k},
    \end{align*}
    where we use the shorthand notation $m_k = \int_{|y|<1} y^k\,\nu(dy)$ and $M_k = \int_{|y|\geq 1} y^k\,\nu(dy)$.

   Now let $p(x) = a_n x^n + \dots + a_1x + a_0$ be a polynomial such that $Ap(x)\equiv c$. Without loss of generality we may assume that $a_n=1$. Then
   \begin{equation*}
	    Ap(x) = (b+M_1) n x^{n-1} + \binom{n}{2} (\sigma^2+m_2+M_2) x^{n-2} + \sum_{k=0}^{n-3} c_k x^k
	 \end{equation*}
    for some coefficients $c_k \in \real$. If $\deg(p)=n \geq 3$, then it follows from  $Ap(x) \equiv c$ by a comparison of coefficients that $\sigma^2 + m_2+M_2 =0$, implying $\sigma^2=0$ and $\nu=0$, and $0=b+M_1 =b$. Hence, $X_t \equiv 0$ is the trivial L\'evy process. In other words, if $(X_t)_{t \geq 0}$ is non-trivial, then any polynomial $p$ with $Ap(x) \equiv c$ satisfies $\deg(p) \leq 2$.
\end{proof}

Denote by $\Delta_y f(x) := f(x+y)-f(x)$ the difference with step $y$ and by $\Delta^{n+1}_yf := \Delta_y(\Delta_y^n f)$ the iterated differences. The next lemma can be reduced to the form $\Delta_y^n f=0$ and then be derived from what is known in the literature as \emph{Fr\'echet's functional equation} or \emph{Cauchy's generalized functional equation}, cf.\ \cite[pp.~129--130]{aczel} for an overview on the literature (but without proofs). For continuous solutions $f$, the earliest proof is due to Anghelutza \cite{angel}, a proof via approximation can be found in Butzer \& Kozakiewicz \cite{butzer}. The following short and elementary argument seems to be new.
\begin{lemma}\label{pre-10}
    Let $\mathcal P_n$ be the family of polynomials of degree $n$. Any continuous solution of the problem
    \begin{gather}\label{pre-e31}
        f(x+y) - f(x) \in \mathcal P_n,\quad y\in\real,
    \end{gather}
    is a polynomial of degree $n+1$.
\end{lemma}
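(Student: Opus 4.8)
The plan is to upgrade the regularity of $f$ from mere continuity to smoothness by an averaging-and-bootstrap argument, and then to read off the conclusion simply by differentiating \eqref{pre-e31} in $x$. Throughout I write the hypothesis as: for each fixed $y$, the map $x\mapsto \Delta_y f(x)=f(x+y)-f(x)$ is a polynomial in $x$ of degree at most $n$.

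\textbf{Step 1 (regularity).} I would integrate \eqref{pre-e31} over $y\in[0,1]$. Setting $G(x):=\int_x^{x+1} f(u)\,du$ and $P(x):=\int_0^1 \Delta_y f(x)\,dy$, this yields the identity $f(x)=G(x)-P(x)$. The point is that $P$ is a polynomial of degree $\le n$: fixing $n+1$ distinct nodes $x_0,\dots,x_n$ and Lagrange-interpolating the degree-$\le n$ polynomial $x\mapsto\Delta_y f(x)$ through them gives $\Delta_y f(x)=\sum_i \Delta_y f(x_i)\,L_i(x)$ with fixed basis polynomials $L_i$, and integrating in $y$ expresses $P$ as a finite linear combination of the $L_i$ (the coefficients $\int_0^1\Delta_y f(x_i)\,dy$ being finite since $y\mapsto f(x_i+y)$ is continuous). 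On the other hand $G$ is one degree smoother than $f$, since $G'(x)=f(x+1)-f(x)$ by the fundamental theorem of calculus. Hence $f=G-P$ is $C^1$; feeding this back makes $G\in C^2$ and therefore $f\in C^2$, and iterating the bootstrap yields $f\in C^\infty$.

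\textbf{Step 2 (conclusion).} Once $f$ is smooth, I would differentiate $\Delta_y f(x)$, a polynomial of degree $\le n$ in $x$, exactly $n+1$ times in $x$, obtaining $f^{(n+1)}(x+y)-f^{(n+1)}(x)=0$ for all $x,y$. Fixing $x=0$ gives $f^{(n+1)}\equiv f^{(n+1)}(0)$, a constant, so $f$ is a polynomial of degree $\le n+1$. Finally, for $y\neq 0$ the difference $\Delta_y f$ of a polynomial of degree $d$ has degree $d-1$; since $\Delta_y f\in\mathcal P_n$ this forces $d-1=n$, i.e.\ $\deg f=n+1$.

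The main obstacle is Step 1: \eqref{pre-e31} presupposes only continuity, and nothing in it directly grants differentiability, so a smoothing device is essential. The averaging $f\mapsto G=\int_\cdot^{\cdot+1} f$ is precisely what converts the structural information that $\int_0^1\Delta_y f\,dy$ is a polynomial into a genuine gain of one derivative, and the bootstrap then promotes this to full smoothness; after that Step 2 is immediate. (Alternatively, one could reduce \eqref{pre-e31} to the Fr\'echet equation $\Delta_y^{\,n+2}f\equiv 0$ — apply $\Delta_y^{\,n+1}$ to the degree-$\le n$ polynomial $\Delta_y f$, which annihilates it — and then invoke the classical theory; but the route above avoids citing that machinery and is entirely self-contained.)
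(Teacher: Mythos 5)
Your proof is correct, and it takes a genuinely different route from the paper's. The paper argues without any smoothing: for a single step $y$ it constructs, via a Newton interpolation series in the factorial polynomials $x^{(k/y)}$, a particular polynomial solution $q_y$ of degree $n+1$, so that $f=q_y+\pi_y$ with $\pi_y$ continuous and $y$-periodic; it then plays a second step $z$ with $z/y\notin\mathds{Q}$ against the first, and the density of the resulting period group together with continuity forces $\pi_y$ to be constant. You replace this two-step/incommensurability device by a regularity bootstrap: averaging the hypothesis over $y\in[0,1]$ gives $f=G-P$ with $P\in\mathcal P_n$ (your Lagrange-interpolation justification that the $y$-integral of the degree-$\leq n$ polynomials $\Delta_y f$ is again such a polynomial is exactly right) and $G(x)=\int_x^{x+1}f(u)\,du$, which gains one derivative via $G'(x)=f(x+1)-f(x)$; iterating yields $f\in C^\infty$, after which differentiating $\Delta_yf$ a total of $n+1$ times in $x$ finishes the proof. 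What each approach buys: the paper's argument is integration-free and exhibits the structure of the homogeneous equation (periodic solutions, two incommensurable steps), while yours is self-contained, avoids both the Newton-series construction and the density argument, and is the standard analytic route to Fr\'echet-type equations. In fact your decomposition proves the lemma faster than you noticed: applying the hypothesis with $y=1$ shows that $G'=\Delta_1 f$ is itself a polynomial of degree at most $n$, so $G$ is a polynomial of degree at most $n+1$, and $f=G-P$ is immediately a polynomial of degree at most $n+1$ --- the bootstrap and your Step 2 are not even needed. (Your closing count that $\deg f=n+1$ exactly shares the same harmless looseness as the paper's statement: if $\Delta_yf$ happens to have degree strictly less than $n$, then so does $f$ relative to $n+1$.)
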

\begin{proof}
    Fix $y\neq 0$, assume that $f(x+y)-f(x) = p_y(x)$ where $p_y\in\mathcal P_n$; clearly, the coefficients of $p_y$ may depend on the parameter $y$. It is not hard to see that this problem is solved by some polynomial of degree $n+1$. Indeed, if we expand $p_y(x)$ into a Newton interpolation series,
    \begin{gather*}
        p_y(x) - p_y(0) = \sum_{k=1}^n a_k(y) x^{\left(\frac ky\right)},\quad
        x^{\left(\frac ky\right)} := x\cdot(x-y)\cdot\ldots\cdot (x-(k-1)y),
    \end{gather*}
    we can reduce the problem to finding polynomials $q_k$ of degree $k+1$ such that $\Delta_y q_k(x) = x^{\left(\frac ky\right)}$ for $k=1,\dots, n$. Obviously, $\Delta_y x^{\left(\frac {k+1}{y}\right)} = (k+1)x^{\left(\frac ky\right)}$, and so $q_y(x) = p_y(0) + \sum_{k=1}^n  a_k(y)  q_k(x)$ is a polynomial solving the original problem.

    If $F_1$ and $F_2$ are solutions to $f(x+y)-f(x) = p_y(x)$, then
    \begin{gather*}
        \Delta_y(F_2-F_1) = \Delta_y F_2 - \Delta_y F_1 = 0,
    \end{gather*}
    which shows that $(F_2-F_1)(x) = (F_2-F_1)(x+y)$, i.e.\ the difference $F_2-F_1$ is a $y$-periodic function. Therefore, the general solution of $\Delta_y f = p_y$ is given by
    \begin{gather*}
        f(x) = q_y(x) + \pi_y(x)
    \end{gather*}
    where $q_y$ is the particular polynomial solution and $x\mapsto\pi_y(x)$ is a continuous $y$-periodic function.

    By assumption $\Delta_z f(x) = f(x+z)-f(x)\in\mathcal P_n$ for any $z\neq y$. We pick $z$ in such a way that $z/y\notin\mathds{Q}$. Using the representation of $f$ and the periodicity of $\pi_y$ we conclude that $\pi_y(x+z)-\pi_y(x)\equiv c$.

    This leads to the problem $\Delta_z\pi_y(x)=c$, and the discussion above shows that its general solution is of the form $\pi_y(x) = a(y,z)x + b(y,z) + \bar\pi_{y,z}(x)$ with a $z$-periodic continuous function $\bar\pi_{y,z}(x)$. Since $\pi_y$ is $y$-periodic, $a(y,z)=0$ and $\bar\pi_{y,z}$ is both $z$- and $y$-periodic. Because of our assumption $z/y\notin\mathds{Q}$, the periods of $\bar\pi_{y,z}(x)$ form a dense subset of $\real$; using the continuity we see that $\pi_{y,z}(x)$, hence $p_y(x)$, is constant. This proves $f = q_y +c\in\mathcal P_{n+1}$.
\end{proof}

Let us close this preparatory section with a simple but extremely useful result.
\begin{lemma}\label{pre-11}
    Let $X$ be a random variable such that $X \sim p(x)\,dx$ and $p>0$ Lebesgue a.e. If $u$ is a Borel measurable function such that $u(X)=c$ almost surely for some constant $c \in \mbb{R}$, then $u(x)=c$ Lebesgue almost everywhere.
\end{lemma}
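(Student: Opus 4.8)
The plan is to translate the almost-sure statement about $u(X)$ into a statement about the Lebesgue measure of the ``bad set''
\begin{equation*}
    B := \{x \in \real : u(x) \neq c\},
\end{equation*}
exploiting that the law of $X$ is absolutely continuous with a strictly positive density. First I would observe that $B$ is a Borel set, since $u$ is Borel measurable, and that the hypothesis $u(X) = c$ almost surely says precisely that $\Pp(X \in B) = 0$. Because $X \sim p(x)\,dx$, this probability is an integral against the density, so
\begin{equation*}
    0 = \Pp(X \in B) = \int_B p(x)\,dx.
\end{equation*}
Everything then reduces to the elementary measure-theoretic implication: if $\int_B p\,dx = 0$ and $p > 0$ Lebesgue almost everywhere, then $B$ is a Lebesgue null set.

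For this last step I would decompose $B$ according to the size of $p$. Setting $B_n := \{x \in B : p(x) \geq 1/n\}$, the trivial pointwise bound $\frac 1n \I_{B_n} \leq p \, \I_{B_n}$ integrates to
\begin{equation*}
    \frac 1n \, \lambda(B_n) \leq \int_{B_n} p(x)\,dx \leq \int_B p(x)\,dx = 0,
\end{equation*}
where $\lambda$ denotes Lebesgue measure; hence $\lambda(B_n) = 0$ for every $n \in \nat$. Since $p > 0$ Lebesgue almost everywhere, the set $\{p \leq 0\}$ is a Lebesgue null set, and $B$ differs from $\bigcup_{n} B_n$ only by a subset of $\{p \leq 0\}$. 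By countable subadditivity,
\begin{equation*}
    \lambda(B) \leq \lambda\Bigl(\bigcup_{n\in\nat} B_n\Bigr) + \lambda(\{p \leq 0\}) \leq \sum_{n\in\nat} \lambda(B_n) + 0 = 0,
\end{equation*}
so $u(x) = c$ for Lebesgue almost every $x$, as claimed.

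I do not expect any genuine obstacle here, as the statement is essentially a packaging of the absolute continuity of the law of $X$. The only point that requires a little care is that the hypothesis is ``$p > 0$ almost everywhere'' rather than ``$p > 0$ everywhere''; this is exactly what the decomposition above is designed to handle, by discarding the null set $\{p \leq 0\}$ before passing to the limit. It is precisely this strict positivity of the density that makes the implication go through: without it, $B$ could have positive Lebesgue measure while still being a $\Pp_X$-null set.
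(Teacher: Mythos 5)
Your proof is correct and takes essentially the same route as the paper: both translate the almost-sure hypothesis into the vanishing of an integral against $p$ and then exploit $p>0$ a.e.\ (the paper writes $\Ee\left[(u(X)-c)^2\right]=0$ and hence $\int_\real (u(x)-c)^2p(x)\,dx=0$, while you write $\int_B p(x)\,dx=0$ for the bad set $B=\{u\neq c\}$). Your decomposition into the sets $B_n=\{x\in B: p(x)\geq 1/n\}$ simply spells out the standard measure-theoretic fact --- a nonnegative function with zero integral vanishes a.e.\ --- which the paper invokes in a single line, so the substance of the two arguments is identical.
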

\begin{proof}
	Clearly, $\Ee \left[(u(X)-c)^2\right]=0$. Thus,
	\begin{equation*}
		0 = \int_{\mbb{R}} (u(x)-c)^2 p(x) \, dx.
	\end{equation*}
	Since $p>0$ a.e., it follows that $u(x)=c$ Lebesgue almost everywhere.
\end{proof}

\section{Main results}

We are now ready to state and prove the main results of this note.
\begin{theorem}\label{main-01}
    Let $(B_t)_{t \geq 0}$ be a one-dimensional Brownian motion, and $f$ a Borel measurable function which is polynomially bounded. If $f(B_t)-\Ee\left[f(B_t)\right]$ is a martingale, then
    \begin{enumerate}
	\item\label{main-01-i}
        $f=\tilde{f}$ almost everywhere for a twice differentiable function $\tilde{f}$ satisfying $A \tilde{f}=\mathrm{const}$, where $A$ is the \textup{(}weakly extended\textup{)} generator,\footnote{$A$ is, of course, the Laplace operator $\frac 12\Delta$. In view of the extension to more general L\'evy processes, we prefer to use the notation $A$ here.}
	\item\label{main-01-ii}
        $f(x) = ax^2+bx+c$ almost everywhere for suitable constants $a,b,c \in \mbb{R}$.
	\end{enumerate}
\end{theorem}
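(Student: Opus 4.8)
The plan is to combine the martingale property with the Markov property of $(B_t)_{t\geq 0}$ to reduce everything to a functional equation for the single function $f$, and then to exploit the smoothing of the heat semigroup together with the Liouville-type results of Section~\ref{sec-pre}. First I would unwind the martingale property. Since $\Ee f(B_t)$ is deterministic and $(B_t)_{t\geq 0}$ is a Markov process with semigroup $T_t$, for $0\le s\le t$ the identity $\Ee(M_t\mid\Fscr_s)=M_s$ becomes
\[
    T_{t-s}f(B_s)-f(B_s)=\Ee f(B_t)-\Ee f(B_s)\quad\text{a.s.}
\]
Because $B_s$ has a strictly positive Gaussian density, Lemma~\ref{pre-11} applied to $u(x)=T_{t-s}f(x)-f(x)$ upgrades this to
\[
    T_{t-s}f(x)-f(x)=\Ee f(B_t)-\Ee f(B_s)\quad\text{for a.e. }x\in\real.
\]
The left-hand side depends on $s,t$ only through $r:=t-s$, hence so does the right-hand side; so $\gamma(r):=\Ee f(B_{s+r})-\Ee f(B_s)$ is independent of $s$, and $T_rf-f=\gamma(r)$ a.e.\ for every $r>0$.

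Next I would pin down $\gamma$. Applying the semigroup law $T_{r_1+r_2}=T_{r_1}T_{r_2}$ to the a.e.\ relation $T_{r_2}f=f+\gamma(r_2)$ (constants are preserved by $T_{r_1}$, and $T_{r_1}$ only sees $f$ up to Lebesgue-null sets) yields $\gamma(r_1+r_2)=\gamma(r_1)+\gamma(r_2)$. Since $r\mapsto\Ee f(B_r)=\int f\,p_r\,dx$ is continuous on $(0,\infty)$ by dominated convergence, $\gamma$ is a continuous additive function, so $\gamma(r)=\alpha r$ for some $\alpha\in\real$. Thus $T_rf(x)-f(x)=\alpha r$ for a.e.\ $x$ and all $r>0$.

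The technical heart, which I expect to be the main obstacle, is the smoothing step: turning the merely measurable, a.e.-defined $f$ into a genuine representative on which a pointwise generator can be computed. For fixed $r>0$ the convolution $T_rf$ is $C^\infty$ (differentiate under the integral against the Gaussian kernel, using the polynomial bound on $f$), so $f$ agrees a.e.\ with the smooth function $\tilde f:=T_rf-\alpha r$. Comparing two such representatives shows $\tilde f$ is independent of $r$, smooth and polynomially bounded, and $f=\tilde f$ a.e. Since $T_rf=T_r\tilde f$, we get $T_r\tilde f(x)-\tilde f(x)=\alpha r$ for all $x,r$; dividing by $r$ and letting $r\to0$ gives $A\tilde f(x)=\alpha$ for every $x$, which is assertion~(i).

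Finally, for (ii): for Brownian motion $A=\tfrac12\partial_x^2$ on smooth functions, so $A\tilde f=\alpha$ means $\tilde f''\equiv2\alpha$; integrating twice gives $\tilde f(x)=ax^2+bx+c$, whence $f(x)=ax^2+bx+c$ a.e. Alternatively, and this is the route that will generalize to arbitrary Lévy processes in Theorem~\ref{main-05}, one subtracts a polynomial particular solution of $Ap=\alpha$, applies the Liouville Theorem~\ref{pre-05} to conclude that $\tilde f$ is a polynomial, and then invokes Lemma~\ref{pre-09} to bound its degree by $2$. Either way, once the smoothing and the $r\to0$ differentiation are justified, the algebraic conclusions are immediate.
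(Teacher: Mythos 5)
Your proposal is correct and follows essentially the same route as the paper's first proof of Theorem~\ref{main-01}: martingale plus Markov property, Lemma~\ref{pre-11} to upgrade the a.s.\ identity along $B_s$ to an a.e.\ identity in $x$, Cauchy's functional equation for $\gamma$, heat-kernel smoothing to produce the representative $\tilde f$, and then $A\tilde f=\alpha$ followed by the polynomial conclusion. Your minor variations --- deriving additivity of $\gamma$ from the semigroup law $T_{r_1+r_2}=T_{r_1}T_{r_2}$ rather than by specializing to $s=0$, using continuity of $r\mapsto \Ee f(B_r)$ (dominated convergence) instead of local boundedness plus Acz\'el's theorem to solve Cauchy's equation, and finishing part (ii) for Brownian motion by integrating $\tilde f''\equiv 2\alpha$ directly rather than invoking Lemma~\ref{pre-07} and Lemma~\ref{pre-09} --- are all sound and, if anything, slightly streamline the argument.
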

\begin{proof}
    Set $\gamma(t) := \Ee f(B_t)$ and denote by $T_t f(x) := \Ee^x\left[f(B_t)\right] = \Ee\left[f(x+B_t)\right]$ the semigroup generated by the process $(B_t)_{t \geq 0}$. Without loss of generality, we may assume that $f(0)=0$, hence $\gamma(0)=0$. We are first going to show that $t \mapsto \gamma(t)$ is a linear function. Since $f(B_t)-\Ee\left[f(B_t)\right]$ is a martingale, we have
	\begin{equation*}
		\Ee\left[f(B_{t+s})-\gamma(t+s) \mid \mc{F}_s\right] = f(B_s)-\gamma(s)
	\end{equation*}
	for all $s,t\geq 0$, and so, by the Markov property,
    \begin{align*}
        f(B_s)-\gamma(s)
        = \Ee\left[f(B_{t+s})-\gamma(t+s)\right \mid \mc{F}_s]
        = \Ee^{B_s}\left[f(B_t)\right] - \gamma(t+s)
        = (T_tf)(B_s) - \gamma(t+s).
	\end{align*}
	We can rearrange this to get
	\begin{equation*}
		(T_{t} f-f)(B_s) = \gamma(t+s)-\gamma(s) \quad \text{almost surely}.
	\end{equation*}
    Since the transition density $p_s$ of $B_s$ is strictly positive, this implies that
	\begin{equation}\label{main-e04}
		(T_{t} f-f)(x)=\gamma(t+s)-\gamma(s) \quad \text{Lebesgue almost everywhere},
	\end{equation}
	see Lemma~\ref{pre-11}. If we choose $s=0$, then \eqref{main-e04} yields
	\begin{equation*}
		(T_{t} f-f)(x) = \gamma(t)\quad\text{Lebesgue almost everywhere},
	\end{equation*}
    and we get that
    \begin{equation*}
		\gamma(t+s) = \gamma(t)+\gamma(s), \quad s,t\geq 0,
	\end{equation*}
    which means that $\gamma$ solves the Cauchy functional equation. As $f$ is polynomially bounded, Lemma~\ref{pre-03} shows that the function $\gamma$ is bounded on every finite interval. Any solution to the Cauchy functional equation which is bounded on at least one non-degenerate interval is, however, linear, see e.g.\ Acz\'el \cite[Theorem~1, p.~34]{aczel}, and so $\gamma(t)=\alpha t+\beta$ for some constants $\alpha,\beta \in \mbb{R}$. From $\gamma(0)=0$, we conclude that $\beta=0$.

    Inserting this into \eqref{main-e04} for $s=0$, we see that
	\begin{equation*}
		T_t f(x) - f(x) = \gamma(t) = \alpha t \quad \text{Lebesgue almost everywhere.}
	\end{equation*}
    Since $p_t(y) = (2\pi t)^{-1/2}\exp\left[-\frac 1{2t} y^2\right]$ is a smooth kernel, the convolution $x\mapsto T_t f(x)$ is twice continuously differentiable, and so $f=\tilde{f}$ almost everywhere for a twice continuously differentiable function $\tilde{f}$. Because of the continuity, we then have $T_t \tilde{f}(x) - \tilde{f}(x)=\alpha t$ for all $x \in \mbb{R}$. In particular, for all $x\in\real$
	\begin{equation*}
		A\tilde{f}(x) = \lim_{t\to 0} \frac{T_t \tilde{f}(x)-\tilde{f}(x)}{t} = \alpha,
	\end{equation*}
    and so $A\tilde{f}=\text{const.}$, in the weak sense. We can now apply Lemma~\ref{pre-07} and~\ref{pre-09} and see that $\tilde{f}$ is a polynomial of degree $\deg(\tilde f)\leq 2$. This finishes the proof.
\end{proof}	

Let us add a further proof which uses only martingale arguments and avoids Cauchy's functional equation. We will present a further proof avoiding Cauchy's functional equation in Theorem~\ref{main-05} below, which generalizes Theorem~\ref{main-01} to L\'evy processes.

\begin{proof}[Alternative proof of Theorem~\ref{main-01}]
Assume that $f$ is a polynomially bounded function such that $f(B_t)-\Ee f(B_t)$ is a martingale.

We will first show that we can replace $f$ by a $C^2$-function. Pick $s,t\geq 0$ and fix $u> 0$. Set $f_u(x) := \Ee^x f(B_u) = \Ee f(B_u+x)$. Using the Markov property and the martingale property, we see
\begin{align*}
    \Ee\left[ f_u(B_{t+s}) - \Ee f_u(B_{t+s}) \mid \Fscr_t\right]
    &= \Ee\left[ \Ee^{B_{t+s}} f(B_u) - \Ee \Ee^{B_{t+s}} f(B_{u}) \mid \Fscr_t\right]\\
    &= \Ee\left[ f(B_{t+s+u}) - \Ee f(B_{t+s+u}) \mid \Fscr_t\right]\\
    &= f(B_t)-\Ee f(B_{t}).
\end{align*}
If we take $s=0$, the above calculation shows
\begin{align*}
    \Ee\left[ f_u(B_{t}) - \Ee f_u(B_{t}) \mid \Fscr_t\right]
    = f(B_t) - \Ee f(B_t)
    = \Ee\left[ f_u(B_{t+s}) - \Ee f_u(B_{t+s}) \mid \Fscr_t\right] ,
\end{align*}
and since $f_u(B_t)$ is $\Fscr_t$ measurable, we conclude that $M_t := f_u(B_t)-\Ee f_u(B_t)$ is a martingale.

Noting that $f_u(x) = \Ee f(B_u+x) = \int_\real f(y)p_u(y-x)\,dy$ with the smooth transition density $p_u(y)$ of $B_u$, we can use dominated convergence to see that $x\mapsto f_u(x)$ is a $C^2$-function. Thus, we can apply It\^o's formula to infer that $N_t := f_u(B_t) - \int_0^t Af_u(B_r)\,dr$ is a local martingale.

Let $(\sigma_n)_{n\in\nat}$, $\sigma_n\uparrow\infty$, be a localizing sequence for $(N_t)_{t\geq 0}$. Since $A f_u(x)$ is continuous, the stopping times
\begin{gather*}
    \tau_n := \sigma_n \wedge \inf\left\{t\geq 0 : |Af_u(B_t)|\leq n\right\}
\end{gather*}
are such that $(N_{\tau_n\wedge t})_{t\geq 0}$ is a martingale and $|Af_u(B_s)|\leq n$ for $s\in [0,\tau_n)$. In particular, $t\mapsto \int_0^{t\wedge\tau_n} Af_u(B_r)\,dr$ is a continuous process which is of bounded variation on compact $t$-intervals.

The martingale $(N_{t\wedge\tau_n})_{t \geq 0}$ has constant expectation, i.e.\
\begin{gather}\label{main-e08}
    \Ee f_{u}(B_{t\wedge\tau_n}) = f_{u}(0) + \Ee \int_0^{t\wedge\tau_n} Af_{u}(B_r)\,dr.
\end{gather}
By assumption, $|f(x)|\leq C(1+|x|)^N$ for some $N\in\nat$. It is easy to see that $f_u$ is also polynomially bounded and that
\begin{align*}
    |Af_u(x)|
    &= \left|\int_\real A_x p_u(y-x) f(y)\,dy\right|
    \leq \int_\real |Ap_u(y)| \cdot |f(y+x)|\,dy\\
    &\leq c(1+|x|)^N \int_\real |p_u''(y)| (1+|y|)^N\,dy
    \leq c_u(1+|x|)^N.
\end{align*}
Using that the maximum of Brownian motion has moments of any order, we find that
\begin{gather*}
    \Ee \int_0^t |Af_{u}(B_r)|\,dr
    \leq t c_u  \Ee \left[\sup_{r\leq t}(1+|B_r|)^N\right] < \infty.
\end{gather*}
We can apply dominated convergence in \eqref{main-e08} to see that
\begin{gather*}
    h(t) := \Ee f_u(B_{t}) = f_u(0) + \Ee \int_0^t Af_u(B_r)\,dr.
\end{gather*}
This shows, in particular, that $t\mapsto h(t)$ is continuous and of bounded variation. Hence,
\begin{gather*}
    M_{t\wedge\tau_n} - N_{t\wedge\tau_n}
    = \int_0^{t\wedge\tau_n} Af_u(B_r)\,dr - \Ee f_u(B_r)\Big|_{r=t\wedge\tau_n}
    = \int_0^{t\wedge\tau_n} Af_u(B_r)\,dr - h(t\wedge\tau_n)
\end{gather*}
is a continuous martingale which is of bounded variation. Such martingales are a.s.\ constant, cf.\ \cite[Proposition IV.(1.2), p.~120]{rev-yor}, and so
\begin{gather*}
    M_{t\wedge\tau_n} - N_{t\wedge\tau_n}
    = M_{0\wedge\tau_n} - N_{0\wedge\tau_n}
    = - f_u(0).
\end{gather*}
Letting $n\to\infty$ we see that for all $s<t$
\begin{gather*}
    \int_s^t Af_u(B_r)\,dr
    = \lim_{n\to\infty}\left[\int_0^{t\wedge\tau_n} Af_u(B_r)\,dr - \int_0^{s\wedge\tau_n} Af_u(B_r)\,dr\right]
    = \lim_{n\to\infty}\left[h(t\wedge\tau_n)-h(s\wedge\tau_n)\right].
\end{gather*}
The right-hand side is the non-random expression $h(t)-h(s)$. Now we divide by $t-s$, and let $s\uparrow t$. This yields
\begin{gather*}
    Af_u(B_t) = \lim_{s\uparrow t} \frac 1{t-s}\int_s^t Af_u(B_r)\,dr = \text{const}.
\end{gather*}
From this point onwards we can argue as in the first proof, and see that $f_u(x) = a_u x^2 + b_ux + c_u$ for suitable constants $a_u,b_u,c_u\in\real$.

Let us return to the original problem. We need to show that $f_u$ converges in a suitable sense to $f$. First we prove that $f_u(B_t)$ converges to $f(B_t)$ in $L^2(\mbb{P})$. To this end, we note that
\begin{equation*}
	x \mapsto I(\phi,x)
	:= \int_{\mbb{R}} |\phi(x+y)-\phi(y)|^2 p_t(y) \, dy
	= \mbb{E}(|\phi(x+B_t)-\phi(B_t)|^2)
\end{equation*}
is continuous for any continuous function $\phi$, which grows at most polynomially. Since $f$ is, by assumption, at most of polynomial growth, and since the bounded continuous functions $C_b(\real)$ are dense in $L^2(e^{-y^2/4t} \,dy)$, there is a sequence $(\phi_n)_{n \in \mbb{N}}\subset C_b(\real)$ such that $\phi_n \to f$ in $L^2(e^{-y^2/4t}\,dy)$ as $n \to \infty$. We have
\begin{align*}
	|\sqrt{I(\phi_n,x)}-\sqrt{I(f,x)}|^2
	&\leq |\sqrt{I(\phi_n-f,x)}|^2\\
	&\leq 2 \mbb{E}(|\phi_n(x+B_t)-f(x+B_t)|^2) + 2 \mbb{E}(|\phi_n(B_t)-f(B_t)|^2) \\
	&\leq \frac{2}{\sqrt{2\pi t}} \left(1+ e^{x^2/2t}\right)  \int_{\real} |\phi_n(y)-f(y)|^2 e^{-y^2/4t} \, dy,
\end{align*}
where we use in the last step the estimate $2|x| |y| = 2 (\sqrt 2|x|)\, (|y|/\sqrt 2) \leq 2 x^2 + y^2/2$ to obtain
\begin{equation*}
	\exp \left(- \frac{(x-y)^2}{2t} \right) \leq \exp \left( \frac{x^2}{2t} \right) \exp \left(- \frac{y^2}{4t} \right).
\end{equation*}
It follows that $I(\phi_n,x) \to I(f,x)$ uniformly on compact sets, and so $x \mapsto I(f,x)$ is continuous. Thus,
\begin{equation*}
	 \int_{\mbb{R}} I(f,x+z) p_u(x) \, dx
	 \xrightarrow[]{u \to 0} I(f,z)
\end{equation*}
for any $z \in \mbb{R}$. Choosing $z=0$ and plugging in the definition of $I$, we get
\begin{equation*}
	\mbb{E}\left[\mbb{E}'(|f(B_u'+B_t)-f(B_t)|^2)\right]
	=\int_{\mathbb{R}} \mbb{E}(|f(x+B_t)-f(B_t)|^2) p_u(x) \, dx \xrightarrow[]{u \to 0} 0
\end{equation*}
for an independent copy $(B_u')_{u\geq 0}$ of $(B_u)_{u\geq 0}$ and with $\mbb{E}'$ to indicate the expectation acting on $B_u'$. As
\begin{equation*}
	\mbb{E}(|f_u(B_t)-f(B_t)|^2)
	= \mbb{E}(|\mbb{E}'f(B_u'+B_t) - f(B_t)|^2)
	\leq \mbb{E}(\mbb{E}'[|f(B_u'+B_t) - f(B_t)|^2]),
\end{equation*}
we conclude that  $f_u(B_t) \to f(B_t)$ in $L^2(\mbb{P})$. Since the transition density $p_t$ is bounded away from zero on any compact set, this implies
\begin{equation*}
	\int_{|x| \leq k} |f_u(x)-f(x)|^2 \, dx \to 0
\end{equation*}
for any $k \in \mbb{N}$. Consequently, $f_u(x) \to f(x)$ Lebesgue almost everywhere for a suitable subsequence of $f_u$. By the first part of this proof, $f_u$ is a polynomial of order $2$, and so is $f$ (up to a Lebesgue null set).
\end{proof}

The key ingredients in the first proof of Theorem~\ref{main-01} are the following observations:
\begin{itemize}
\item $B_t$ is a L\'evy process,
\item $B_t$ has a transition density which is strictly positive,
\item the generator of $B_t$ satisfies the Liouville theorem,
\end{itemize}
and so the following generalization of Theorem~\ref{main-01} is natural. If $(X_t)_{t\geq 0}$ is a L\'evy process which has a transition density, then the convolution identity $p_{t+s}(x)=p_t*p_s(x)$ shows that $x\mapsto p_t(x)$ is continuous, i.e.\ $p_t>0$ Lebesgue a.e.\ already means that $p_t>0$ on $\real$. Moreover, if $(t,x)\mapsto p_t(x)$ is jointly continuous on $(0,\infty)\times\real$, then $\{x : p_t(x)>0\}$ is either $\real$ or a half-line, i.e.\ $\supp(p_t) = \real$ implies $\{p_t>0\}=\real$; joint continuity is, e.g., implied by $e^{-t\psi(\xi)}\in L^1(d\xi)$ for all $t>0$, see Sharpe \cite{sharpe69} for a discussion, and \cite{kno-schi13} for further results on the existence and smoothness of L\'evy densities.

\begin{theorem}\label{main-05}
    Let $(X_t)_{t \geq 0}$ be a one-dimensional L\'evy process which has a transition density $p_t$ for all $t>0$, such that $p_t>0$ for some $t>0$. Assume that
    \begin{enumerate}
    \item[\itshape either]\textup{(a)}\ \ the characteristic exponent $\psi$ is of class $C^\infty$, and $n\geq 2$ is arbitrary,
    \item[\itshape or]\textup{(b)}\ \ $p_t\in C_b^1(\real)$, $t>0$, and $\Ee |X_1|^{n+\epsilon}<\infty$ for some $\epsilon>0$ and $n\geq 2$.
    \end{enumerate}
    Let $|f(x)|\leq c(1+|x|^{n})$ be a polynomially bounded, measurable function. If $f(X_t)-\Ee f(X_t)$ is a martingale, then $f = \tilde{f}$ almost everywhere for some continuous function $\tilde{f}$ satisfying $A \tilde{f}=\text{const}$, where $A$ is the \textup{(}weakly extended\textup{)} generator. In fact, $\tilde{f}(x)=ax^2+bx+c$ for suitable constants $a,b,c \in \mbb{R}$.
\end{theorem}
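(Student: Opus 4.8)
The plan is to follow the first proof of Theorem~\ref{main-01} as closely as possible, replacing the Gaussian-specific steps by the general tools of Section~\ref{sec-pre}. Write $T_t$ for the transition semigroup of $X$ and set $\gamma(t):=\Ee f(X_t)$. First I would fix $t_0>0$ with $p_{t_0}>0$ and note that, for every $s>t_0$, the convolution identity $p_s(x)=\int_\real p_{t_0}(x-y)\,p_{s-t_0}(y)\,dy$ together with $p_{t_0}>0$ everywhere and $\int p_{s-t_0}\,dy=1$ forces $p_s>0$ for \emph{all} $s\geq t_0$; this is what makes Lemma~\ref{pre-11} applicable. Just as in Theorem~\ref{main-01}, the martingale and Markov properties give $(T_tf-f)(X_s)=\gamma(t+s)-\gamma(s)$ almost surely, and Lemma~\ref{pre-11} upgrades this to $(T_tf-f)(x)=\gamma(t+s)-\gamma(s)$ for Lebesgue-almost every $x$, now valid for every $t\geq 0$ and every $s\geq t_0$.

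Since the left-hand side is independent of $s$, the quantity $\delta(t):=\gamma(t+s)-\gamma(s)$ does not depend on the choice of $s\geq t_0$ and is thus well defined. Evaluating at the base points $t_0$ and $t_1+t_0$ gives $\delta(t_1+t_2)=\delta(t_1)+\delta(t_2)$, i.e.\ $\delta$ solves Cauchy's functional equation. The polynomial bound $|f(x)|\leq c(1+|x|^n)$ together with the available moments — all moments in case~(a), and the bound $\sup_{s\leq T}\Ee\bigl((1+|X_s|)^n\bigr)<\infty$ in case~(b), cf.\ the discussion around \eqref{intro-e13} and \cite[Thm.~3]{ber-kue-schi} — shows that $\gamma$, and hence $\delta$, is bounded on finite intervals. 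By the same result of Acz\'el \cite[Theorem~1, p.~34]{aczel} used above, $\delta(t)=\alpha t$ for some $\alpha\in\real$, and therefore $(T_tf-f)(x)=\alpha t$ for almost every $x$ and every $t>0$.

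Next I would convert this into a statement about the weakly extended generator. Pairing the identity with an arbitrary $\phi\in C_c^\infty(\real)$ and using the duality $\scalp{T_tf}{\phi}=\scalp{f}{T_t^*\phi}$ (Fubini is legitimate by the moment bounds, exactly as in \eqref{intro-e15}) gives $\scalp{f}{(T_t^*\phi-\phi)/t}=\alpha\int_\real\phi\,dx$ for every $t>0$. Letting $t\to 0$ and dominating as in the derivation of \eqref{intro-e15} — the domination comes from \eqref{intro-e13} and the polynomial growth of $f$ — the left-hand side tends to $\scalp{f}{A^*\phi}=\scalp{Af}{\phi}$, so that $Af=\alpha$ holds weakly.

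Finally I would invoke the Liouville results. In case~(a) the existence of a transition density excludes a lattice distribution, hence $\psi$ has no zero other than $\xi=0$; Lemma~\ref{pre-07} then shows that $f$ coincides a.e.\ with a polynomial, and Lemma~\ref{pre-09} (the process is non-trivial, since it has a density) bounds its degree by $2$. In case~(b) Theorem~\ref{pre-05} is phrased for $Au=0$, so I would first exhibit a polynomial $q$ with $Aq=\alpha$ and $\deg q\leq 2$: the integro-differential form of $A$ yields $Ax=\Ee X_1$ and $Ax^2=2(\Ee X_1)x+\bigl(\sigma^2+\int_{y\neq 0}y^2\,\nu(dy)\bigr)$, and non-triviality forces $\Ee X_1\neq0$ or $\sigma^2+\int y^2\,\nu(dy)\neq0$, so a linear or quadratic $q$ exists. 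Then $f-q$ is a polynomially bounded weak solution of $A(f-q)=0$ with $|f-q|\leq c'(1+|x|^n)$, whence Theorem~\ref{pre-05} makes it a polynomial of degree $\leq n$ and Lemma~\ref{pre-09} again caps the degree at $2$. In either case $f=\tilde f$ a.e.\ with $\tilde f(x)=ax^2+bx+c$, which is continuous and satisfies $A\tilde f=\text{const}$. The step I expect to be most delicate is the passage from the a.e.\ identity $(T_tf-f)(x)=\alpha t$ to $Af=\alpha$ in the weak sense, because the exceptional null set may depend on $t$; routing the argument through the semigroup duality and the dominated-convergence estimate behind \eqref{intro-e15}, rather than through a pointwise bp-limit, is what circumvents this. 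A secondary point is the reduction of the inhomogeneous equation to the homogeneous hypothesis of Theorem~\ref{pre-05}, where non-triviality of $X$ is indispensable.
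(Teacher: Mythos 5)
Your proposal is correct, and in case (b) it takes a genuinely different route from the paper's. The openings coincide: martingale plus Markov property, Lemma~\ref{pre-11} via the strictly positive density, and in case (a) both arguments end with Lemma~\ref{pre-07} and Lemma~\ref{pre-09} (your lattice-distribution argument for $\{\psi=0\}=\{0\}$ is a valid substitute for the paper's Riemann--Lebesgue argument; your explicit propagation of $p_s>0$ to all $s\geq t_0$ and the Cauchy equation with base points $s\geq t_0$ are in fact \emph{more} careful than the paper's gloss ``as in the first proof of Theorem~\ref{main-01}''). The divergence is in case (b): the paper does not form the weak equation $Af=\alpha$ there at all. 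Instead it upgrades $f$ to a continuous $\tilde f$ with $T_t\tilde f-\tilde f=\gamma(t)$ pointwise, applies the difference operator $\Delta_y$ to annihilate the inhomogeneity, so that $A(\Delta_y\tilde f)=0$ weakly, invokes Theorem~\ref{pre-05} on each $\Delta_y\tilde f$, and then uses Lemma~\ref{pre-10} (Fr\'echet's functional equation) to integrate the differences back to a polynomial; only afterwards does it obtain $A\tilde f=\gamma'(0)$ via Lemma~\ref{pre-03}, and --- as its footnote stresses --- this route avoids Cauchy's functional equation entirely, since the particular form of $\gamma$ is irrelevant. You instead keep Cauchy's equation (harmless here, as $\gamma$ is bounded on finite intervals by the moment bounds), pass to $Af=\alpha$ weakly through the duality behind \eqref{intro-e15} --- correctly flagging that the $t$-dependent null sets are the delicate point --- and reduce to the homogeneous hypothesis of Theorem~\ref{pre-05} by subtracting an explicit particular solution $q$ with $Aq=\alpha$ and $\deg q\leq 2$; your dichotomy that non-triviality forces $\Ee X_1\neq 0$ or $\sigma^2+\int_{y \neq 0} y^2\,\nu(dy)\neq 0$ is correct (indeed, existence of a density already rules out the deterministic case), and the required moments come from $n\geq 2$. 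What each buys: the paper's $\Delta_y$ trick dispenses with Cauchy's equation (this is what let the authors drop a centering assumption), at the price of the continuity upgrade and Lemma~\ref{pre-10}; your particular-solution reduction is the more classical PDE-style argument and avoids Lemma~\ref{pre-10} and the continuity step, at the price of one small unstated verification --- that the weakly extended generator agrees with the integro-differential formula on polynomials, so that $Aq=\alpha$ indeed holds weakly; this is routine via a Dynkin--Fubini computation as in \eqref{intro-e15}, but deserves a sentence.
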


\begin{proof}
    Under the assumption (a), we can literally use the first proof of Theorem~\ref{main-01}. Notice that the existence of the density $p_t$ and the (trivial) fact that $p_t\in L^1(dx)$ allows us to use the Riemann--Lebesgue lemma to conclude that $\lim_{|\xi|\to\infty} e^{-t\psi(\xi)}=0$, hence $\lim_{|\xi|\to\infty}\psi(\xi)=\infty$ and $\{\psi = 0\}=\{0\}$.

    Assume now that (b) holds. As in the first proof of Theorem~\ref{main-01}, we find that
	\begin{equation*}
		T_t f(x)-f(x) = \gamma(t) \quad \text{a.e.}
	\end{equation*}
    Since $f\mapsto T_t f$ is a convolution operator, the function $x\mapsto T_t f(x)$ is continuous, and so there is a continuous function $\tilde{f}$ such that $f=\tilde{f}$ almost everywhere and $T_t \tilde{f}(x) - \tilde{f}(x) = \gamma(t)$ for all $x \in \mbb{R}$. As $T_t \tilde{f}(x+y) - \tilde{f}(x+y) = T_t (\tilde{f}(\cdot+y))(x)-\tilde{f}(\cdot+y)(x)= \gamma(t)$, it follows that the difference $\Delta_y \tilde{f}(x):=\tilde{f}(x+y)-\tilde{f}(x)$ satisfies $T_t(\Delta_y \tilde{f})- \Delta_y \tilde{f} = 0$. Hence, $A(\Delta_y \tilde{f})=0$ weakly, and so the Liouville theorem yields that $\Delta_y \tilde{f}$ is a polynomial. By Lemma~\ref{pre-10}, this implies that $\tilde{f}$ is a polynomial.\footnote{It is interesting to note that the particular form of $\gamma(t)$ does not play any role here. To wit: if $u$ is a continuous function such that $T_t u(x)-u(x)=\gamma(t)$ does not depend on $x$, then $u$ is a polynomial. This allows us to avoid Cauchy's functional equation, which we used in the first proof of Theorem~\ref{main-01}.} Thus, by Lemma~\ref{pre-03}, $\gamma(t) = \mathbb{E}\tilde{f}(X_t)$ is a polynomial. In particular, $\gamma(t)$ is differentiable, which implies
	\begin{equation*}
		A\tilde{f}(x)
		= \lim_{t \to 0} \frac{T_t \tilde{f}(x)-\tilde{f}(x)}{t}
		= \lim_{t \to 0} \frac{\gamma(t)}{t}
		=\lim_{t \to 0} \frac{\gamma(t)-\gamma(0)}{t} = \gamma'(0), \quad x \in \mbb{R}.
	\end{equation*}
	Consequently, Lemma~\ref{pre-09} shows that $\tilde{f}$ is a polynomial of degree $\deg(\tilde{f})\leq 2$.
\end{proof}

We will now turn to the `multiplicative' version of Theorem~\ref{main-01}.

\begin{theorem}\label{main-11}
    Let $(B_t)_{t \geq 0}$ be a one-dimensional Brownian motion, and $g: \mbb{R} \to (0,\infty)$ a Borel measurable function which grows at most exponentially. If $g(B_t)/\Ee g(B_t)$ is a martingale, then $g=\tilde{g}$ almost everywhere for a twice differentiable function $\tilde{g}$ which satisfies
	\begin{equation*}
		\frac{A\tilde{g}}{\tilde{g}} = \text{const.}
	\end{equation*}
    where $A$ is the \textup{(}weakly extended\textup{)} generator. Moreover, all positive solutions are of the form $g(x)= a e^{c x} + b e^{-c x}$ almost everywhere for suitable constants $a,b,c \geq 0$.
\end{theorem}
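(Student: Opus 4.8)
The plan is to mirror the first proof of Theorem~\ref{main-01}, replacing the additive Cauchy equation by its multiplicative counterpart. Write $\rho(t):=\Ee g(B_t)$, which is finite and strictly positive because $g>0$ grows at most exponentially and $B_t$ has exponential moments of every order. Since $N_t=g(B_t)/\rho(t)$ is a martingale, the Markov property gives $\Ee[g(B_{t+s})\mid\Fscr_s]=(T_tg)(B_s)$, and comparing this with $\Ee[N_{t+s}\mid\Fscr_s]=N_s$ yields
\begin{equation*}
    (T_tg)(B_s)=\frac{\rho(t+s)}{\rho(s)}\,g(B_s)\quad\text{a.s.}
\end{equation*}
Applying Lemma~\ref{pre-11} to $u(x):=(T_tg)(x)-\frac{\rho(t+s)}{\rho(s)}g(x)$ (the density of $B_s$ is strictly positive for $s>0$) converts this into the Lebesgue-a.e.\ identity $(T_tg)(x)=\frac{\rho(t+s)}{\rho(s)}g(x)$, valid for all $t\geq0$ and $s>0$. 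Because the left-hand side does not depend on $s$ and $g>0$, the factor $c(t):=\rho(t+s)/\rho(s)$ is independent of $s$, and the consistency relation $\rho(t+s+r)=c(t)c(s)\rho(r)=c(t+s)\rho(r)$ forces the multiplicative Cauchy equation $c(t+s)=c(t)c(s)$ with $c(0)=1$.

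Next I would solve this functional equation. Since $g$ is exponentially bounded, $\rho(t)\leq C\,\Ee e^{\lambda|B_t|}$ is bounded on every compact $t$-interval, so $c(t)=\rho(t+s_0)/\rho(s_0)$ is locally bounded; hence $t\mapsto\log c(t)$ is a locally bounded solution of the additive Cauchy equation and is therefore linear, giving $c(t)=e^{\kappa t}$ for some $\kappa\in\real$. This yields $(T_tg)(x)=e^{\kappa t}g(x)$ a.e. Because $p_t$ is a smooth Gaussian kernel and $g$ is exponentially bounded, $x\mapsto(T_tg)(x)$ is smooth and strictly positive, so $g=\tilde g$ a.e.\ for a smooth, strictly positive function $\tilde g$ satisfying $(T_t\tilde g)(x)=e^{\kappa t}\tilde g(x)$ for every $x$. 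Differentiating at $t=0$ gives
\begin{equation*}
    A\tilde g(x)=\lim_{t\to0}\frac{(T_t\tilde g)(x)-\tilde g(x)}{t}=\lim_{t\to0}\frac{e^{\kappa t}-1}{t}\,\tilde g(x)=\kappa\,\tilde g(x),
\end{equation*}
so that $A\tilde g/\tilde g=\kappa=\text{const}$, which is the first assertion.

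Finally, since $A=\tfrac12\frac{d^2}{dx^2}$ for Brownian motion, the eigenrelation reads $\tilde g''=2\kappa\tilde g$, a second-order linear ODE whose solution depends on the sign of $\kappa$: for $\kappa>0$ one gets $\tilde g(x)=ae^{cx}+be^{-cx}$ with $c=\sqrt{2\kappa}$; for $\kappa=0$ an affine function $a+bx$; for $\kappa<0$ an oscillatory combination of $\cos(\omega x)$ and $\sin(\omega x)$. I would then invoke $\tilde g>0$ on all of $\real$ to discard the cases that cannot remain positive: the oscillatory solutions ($\kappa<0$) and the affine solutions with nonzero slope take negative values, while positivity of the surviving exponential solutions forces $a,b\geq0$. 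This leaves exactly $g(x)=ae^{cx}+be^{-cx}$ a.e.\ with $a,b,c\geq0$ (the constant case $\kappa=0$ being absorbed into $c=0$).

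I expect the main obstacle to be this concluding positivity analysis rather than the functional-equation step: one must carefully exclude the oscillatory and strictly affine solutions and determine the admissible signs of the coefficients, all while remembering that $\tilde g$ is only an a.e.\ representative of $g$ (so positivity must be read off the continuous, everywhere-defined representative $T_tg/c(t)$). By contrast, the derivation of the multiplicative Cauchy equation and the passage to $c(t)=e^{\kappa t}$ run almost verbatim as in the additive case, once the local boundedness of $\rho$ is noted.
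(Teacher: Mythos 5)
Your proposal is correct and follows essentially the same route as the paper's proof: the martingale plus Markov property yields $T_tg(x)=\frac{\gamma(t+s)}{\gamma(s)}g(x)$ Lebesgue-a.e., the multiplicative Cauchy functional equation (reduced via $\log$ to the additive one, using local boundedness of $\gamma$) gives $\gamma(t)=e^{\alpha t}$, convolution with the smooth Gaussian kernel produces a $C^2$ representative $\tilde g$ with $A\tilde g=\alpha\tilde g$, and the constant-coefficient second-order ODE is then solved under the positivity constraint. The only difference is cosmetic: you spell out the case distinction $\kappa>0$, $\kappa=0$, $\kappa<0$ that the paper compresses into writing $c=\sqrt{\alpha}\in\mathds{C}$ and invoking positivity, and your observation that strict positivity of $\tilde g$ should be read off the everywhere-defined representative $T_tg/c(t)$ is a correct point the paper leaves implicit.
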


\begin{proof}
    Set $\gamma(t):= \Ee g(B_t)$ and denote by $T_t g(x) = \Ee^x g(B_t) = \Ee g(B_t+x)$ the semigroup generated by the process $(B_t)_{t\geq 0}$. Without loss of generality, we may assume that $g(0)=1$, hence $\gamma(0)=1$.
    From $|g(x)|\leq c e^{\kappa |x|}$ we see that $\sup_{s\leq t}\Ee |g(B_s)|<\infty$ for any $t>0$; in particular, $\gamma(t)$ is strictly positive and bounded in some neighbourhood $[0,\epsilon_0]$ of $t=0$.

    We are first going to show that $\gamma(t)$ is of the form $\gamma(t)=e^{\alpha t}$ for some $\alpha \in \mbb{R}$. We use again the argument from Theorem~\ref{main-01}. Since $g(B_t)/\gamma(t)$ is a martingale, we have
	\begin{gather*}
		\frac{\Ee \left[g(B_{t+s}) \mid \mc{F}_{s}\right]}{\gamma(t+s)} = \frac{g(B_s)}{\gamma(s)}
	\end{gather*}
    for all $s,t>0$, and so, by the Markov property,
    \begin{gather*}
        \frac{g(B_s)}{\gamma(s)}
        = \frac{\Ee \left[g(B_{t+s}) \mid \mc{F}_{s}\right]}{\gamma(t+s)}
        = \frac{\Ee^{B_s} \left[g(B_{t})\right]}{\gamma(t+s)}
        = \frac{(T_t g)(B_s)}{\gamma(t+s)}.
    \end{gather*}
    We can rearrange this to get
    \begin{gather*}
        \frac{(T_tg)(B_s)}{g(B_s)} = \frac{\gamma(t+s)}{\gamma(s)} \quad\text{almost surely},
    \end{gather*}
    which implies, by the strict positivity of the transition density $p_s$ of $B_s$,
    \begin{gather}\label{main-e10}
        \frac{T_tg(x)}{g(x)} = \frac{\gamma(t+s)}{\gamma(s)} \quad\text{Lebesgue almost everywhere}.
    \end{gather}
    Taking $s=0$ and $x=0$, we conclude that
    \begin{gather*}
        \gamma(t+s)=\gamma(t)\gamma(s),\quad s,t\geq 0.
    \end{gather*}
    This means that $t\mapsto\log\gamma(t)$ satisfies the Cauchy functional equation; moreover, it is bounded on the interval $[0,\epsilon_0]$. Therefore (see the proof of Theorem~\ref{main-01}), $t\mapsto\log\gamma(t)$ is linear, and we get $\gamma(t)=\gamma(0) e^{\alpha t} = e^{\alpha t}$ for some $\alpha\in\real$. Taking $s=0$ in \eqref{main-e10}, we obtain that
    \begin{gather*}
        T_t g(x) = e^{\alpha t}g(x)\quad\text{Lebesgue almost everywhere}.
    \end{gather*}
    Since $p_t(y) = (2\pi t)^{-1/2}\exp\left[-\frac 1{2t} y^2\right]$ is a smooth kernel, the convolution $x\mapsto T_t g(x)$ is twice continuously differentiable, and so we find that $g=\tilde g$ Lebesgue a.e.\ for a function $\tilde g \in C^2(\mbb{R})$. Finally, we see that
	\begin{align*}
		A\tilde{g}(x)
		= \lim_{t \to 0} \frac{T_t \tilde{g}(x)-\tilde{g}(x)}{t}
		&= \lim_{t \to 0} \frac{e^{\alpha t}-1}{t} \tilde{g}(x)
		= \alpha \tilde{g}(x).
	\end{align*}

    Since $\tilde{g}$ is twice differentiable, $A\tilde{g}$ has a classical meaning and all solutions of $A\tilde{g}=-\alpha\tilde{g}$ are solutions of the following second-order linear ODE with constant coefficients: $y'' + \alpha y = 0$ and $y(0)=1$. Solving this, we see that $y(x) = a e^{cx}+be^{-cx}$ with $c=\sqrt\alpha\in\mathds{C}$ and $a+b=1$, so all positive solutions are given by $ae^{cx}+be^{-cx}$ with $a,b,c\geq 0$.
\end{proof}

The next result extends Theorem~\ref{main-11} to the class of L\'evy processes with finite exponential moments.
\begin{theorem} \label{main-13}
    Let $(X_t)_{t \geq 0}$ be a  one-dimensional L\'evy process which has a transition density $p_t$ for all $t>0$. Let $g: \mbb{R} \to (0,\infty)$ be a Borel measurable function which grows at most exponentially, i.e. $|g(x)| \leq c e^{\kappa |x|}$. Assume that $\mbb{E}e^{\kappa |X_t|}<\infty$ and
    \begin{enumerate}
        \item[\itshape either]\textup{(a)}\ \ $p_t>0$ on $\mbb{R}$ for some $t>0$
       	    \item[\itshape or]\textup{(b)}\ \ $p_t=0$ on $(-\infty,0)$ and $p_t>0$ on $(0,\infty)$ for some $t>0$.
    \end{enumerate}
    If $g(B_t)/\mbb{E}g(B_t)$ is a martingale, then $g=\tilde{g}$ almost everywhere for a continuous function $\tilde{g}$ satisfying
	\begin{equation*}
		\frac{A \tilde{g}(x)}{\tilde{g}(x)} = \alpha,
	\end{equation*}
    for some constant $\alpha\in\real$, where $A$ denotes the \textup{(}weakly extended\textup{)} generator.

    The equation $\mbb{E}\exp(\lambda X_t)=\exp(\alpha t)$ admits at most two \textup{(}in case \textup{(a)}: at most one\textup{)} solutions $\lambda = \lambda_1,\lambda_2\in\real$. If there are solutions, then $g(x) = a e^{\lambda_1 x} + b e^{\lambda_2 x}$ almost everywhere for suitable constants $a,b \geq 0$; otherwise there is only the trivial solution $g \equiv 0$.
\end{theorem}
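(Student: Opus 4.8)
\emph{Reduction to an eigenvalue equation.} The plan is to run the opening of the proof of Theorem~\ref{main-11}, using the convolution structure of the L\'evy semigroup in place of the Gaussian one. Normalising $g(0)=1$ and writing $\gamma(t)=\Ee g(X_t)$ and $T_t g(x)=\Ee g(x+X_t)$, the martingale and Markov properties give $T_tg(x)/g(x)=\gamma(t+s)/\gamma(s)$ almost surely; positivity of $p_s$ and Lemma~\ref{pre-11} upgrade this to a Lebesgue-a.e.\ identity (only on $(0,\infty)$ in case~(b)). Taking $x=s=0$ yields $\gamma(t+s)=\gamma(t)\gamma(s)$, so $\log\gamma$ solves Cauchy's equation and is bounded near $0$ because $\Ee e^{\kappa|X_t|}<\infty$; hence $\gamma(t)=e^{\alpha t}$ for some $\alpha\in\real$, and $T_tg=e^{\alpha t}g$ a.e. Since $T_t$ is convolution by the continuous density $p_t$, the map $x\mapsto T_tg(x)$ is continuous, so $g=\tilde g$ a.e.\ for a continuous $\tilde g>0$ with $T_t\tilde g=e^{\alpha t}\tilde g$ everywhere. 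Dividing $T_t\tilde g(x)-\tilde g(x)=(e^{\alpha t}-1)\tilde g(x)$ by $t$ and letting $t\to0$ (as in Theorem~\ref{main-11}, the bp-limit exists by the exponential bounds) gives $A\tilde g=\alpha\tilde g$ weakly, i.e.\ $A\tilde g/\tilde g=\alpha$.

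\emph{The admissible exponents.} Next I would pin down which exponentials solve the eigenequation. For $\lambda\in\real$ one has $T_te^{\lambda\,\cdot}(x)=e^{\lambda x}\Ee e^{\lambda X_t}=e^{\lambda x}e^{t\Lambda(\lambda)}$ with the Laplace exponent $\Lambda(\lambda):=\log\Ee e^{\lambda X_1}$, so $e^{\lambda x}$ is a solution iff $\lambda\in S:=\{\lambda:\Lambda(\lambda)=\alpha\}$, equivalently $\Ee e^{\lambda X_t}=e^{\alpha t}$. As $\Lambda$ is convex on its domain, $S$ has at most two points; in the one-sided setting of case~(b), $X_t\geq0$ forces $\Lambda$ to be strictly increasing, so $S$ is a singleton. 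This gives the asserted count, and the computation $T_t(ae^{\lambda_1\cdot}+be^{\lambda_2\cdot})=e^{\alpha t}(ae^{\lambda_1\cdot}+be^{\lambda_2\cdot})$ for $\lambda_1,\lambda_2\in S$ settles the easy converse.

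\emph{The hard direction via Choquet--Deny.} It remains to show that every positive continuous solution of $T_t\tilde g=e^{\alpha t}\tilde g$ is a nonnegative combination of the exponentials $e^{\lambda x}$, $\lambda\in S$. This is the crux, and it is exactly here that the Liouville theorems used earlier fail: after any exponential tilt one of the two exponential modes survives, so $\tilde g$ need not be polynomially bounded and Theorem~\ref{pre-05} does not apply. Instead I would invoke a Choquet--Deny/Deny-type representation of positive harmonic functions. Fix $t>0$, let $\tilde\mu_t$ be the law of $-X_t$, so $\tilde\mu_t*\tilde g=e^{\alpha t}\tilde g$, and pick a root $\lambda_0\in S$. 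The Esscher transform $\nu(dy):=e^{-\lambda_0 y-\alpha t}\,\tilde\mu_t(dy)$ is then a genuine probability measure (because $\Lambda(\lambda_0)=\alpha$), and a short computation shows $\nu*v=v$ for $v:=e^{-\lambda_0 x}\tilde g>0$. Since $X_t$ has a density, $\supp\nu$ generates $\real$, so Deny's theorem gives $v(x)=\int e^{\mu x}\rho(d\mu)$ with $\rho\geq0$ supported on the set of $\nu$-harmonic exponents, which is precisely $S-\lambda_0$. As $S$ has at most two points, $\rho=a\delta_{\lambda_1-\lambda_0}+b\delta_{\lambda_2-\lambda_0}$ with $a,b\geq0$, whence $\tilde g(x)=e^{\lambda_0 x}v(x)=ae^{\lambda_1 x}+be^{\lambda_2 x}$; the a priori bound $\tilde g(x)\leq C e^{\kappa|x|}$ forces $\lambda_1,\lambda_2\in[-\kappa,\kappa]$, consistent with $\Lambda(\lambda_i)<\infty$.

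\emph{Loose ends and the main obstacle.} The main obstacle is precisely this Choquet--Deny step: one must check its hypotheses, namely aperiodicity (immediate from the existence of the density) and the reduction to a probability measure via the Esscher transform, which presupposes $S\neq\emptyset$. The latter is automatic, since a strictly positive $\tilde g$ cannot exist when $\Lambda(\lambda)=\alpha$ has no real root; equivalently, if $S=\emptyset$ the only solution of the original problem is the trivial $g\equiv0$. In the one-sided case~(b) the semigroup only sees $g$ on $[0,\infty)$ and $p_t>0$ merely on $(0,\infty)$, so the a.e.\ identities and the harmonic-function representation must be read on the half-line; there the single admissible exponent makes the analysis easier and yields $\tilde g(x)=ae^{\lambda_1 x}$ with $a>0$. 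Assembling the three steps gives $g=\tilde g$ a.e.\ with $A\tilde g/\tilde g=\alpha$ and $\tilde g(x)=ae^{\lambda_1 x}+be^{\lambda_2 x}$, completing the proof.
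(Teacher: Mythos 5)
Your reduction to the eigenvalue equation $T_t\tilde g=e^{\alpha t}\tilde g$ is exactly the paper's first step (Cauchy's functional equation for $\log\gamma$, Lemma~\ref{pre-11}, continuity via the convolution structure), and you are also right that the Liouville machinery from Theorems~\ref{pre-05} and~\ref{main-05} is useless here and that a Choquet--Deny-type result is the crux. Where you differ: the paper does not tilt. It reads $T_t\tilde g=e^{\alpha t}\tilde g$ directly as the convolution equation $\int\tilde g(x+y)\,\varrho_t(dy)=\tilde g(x)$ with the \emph{non-normalised} measure $\varrho_t(dy)=e^{-\alpha t}p_t(y)\,dy$ and quotes the integrated-Cauchy-equation literature in that generality: Ramachandran--Prakasa Rao \cite{ramachandran} when $\varrho_t$ has full support, Lau--Rao \cite{lau-rao} when $\supp\varrho_t\subset[0,\infty)$. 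Those results deliver in one stroke both the representation $\tilde g=ae^{\lambda_1x}+be^{\lambda_2x}$ over the root set $S=\{\lambda:\Ee e^{\lambda X_t}=e^{\alpha t}\}$ \emph{and} the assertion that $S=\emptyset$ forces $\tilde g\equiv 0$. Your Esscher transform $\nu(dy)=e^{-\lambda_0y-\alpha t}\,\tilde\mu_t(dy)$ is correct as far as it goes: one checks $\nu(\real)=1$ and $\nu*v=v$ for $v=e^{-\lambda_0x}\tilde g$ precisely because $\lambda_0\in S$, and it buys you the cleaner probability-measure form of Deny's theorem \cite{deny}. But note that in case (b) the tilted measure is still carried by a half-line, so even after tilting you need the half-line (Lau--Rao) version rather than Deny's theorem on $\real$ verbatim; you flag the half-line issue but do not carry it through, and the same caveat applies to the a.e.\ identity from Lemma~\ref{pre-11}, which in case (b) is available only on $\{p_s>0\}=(0,\infty)$.

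The genuine gap is your treatment of $S=\emptyset$. Your tilt presupposes a root $\lambda_0\in S$, so when $S$ is empty you have no argument, and your claim that nonexistence of a positive solution is then ``automatic'' is circular: the statement that a strictly positive eigenfunction forces $S\neq\emptyset$ is itself a Deny-type theorem (essentially, that the extreme rays of the translation-invariant cone of positive solutions are exponentials), not a consequence of anything you have proved. The measure-form results the paper cites close this case; if you insist on the tilting route, you must still invoke them (or reprove Deny) for the non-probability measure $\varrho_t$, which is why the paper skips the tilt altogether. One point in your favour: your count --- $S$ has at most two elements by strict convexity of $M(\lambda)=\Ee e^{\lambda X_t}$ on $(-\kappa,\kappa)$, and at most one in the one-sided case since there $M'(\lambda)=\Ee\left[X_te^{\lambda X_t}\right]>0$ --- is the mathematically correct one; the parenthetical ``in case (a): at most one'' in the theorem (and the corresponding footnote) appears to have the labels (a) and (b) interchanged, as is already visible from Theorem~\ref{main-11}, where Brownian motion (an instance of case (a)) produces the two exponentials $ae^{cx}+be^{-cx}$.
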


\begin{proof}
    Denote by $(T_t)_{t \geq 0}$ the semigroup associated with $(X_t)_{t \geq 0}$, and assume without loss of generality that $g(0)=1$. Following the reasoning from the proof of Theorem~\ref{main-11}, we find that there is a constant $\alpha \in \mbb{R}$ such that $T_t g(x) = e^{\alpha t} g(x)$ for Lebesgue almost all $x$. Since $g\mapsto T_t g$ is a convolution operator, the function $x\mapsto T_t g(x)$ is continuous, and so $g(x) = \tilde{g}(x)$ Lebesgue a.e.\ for a continuous function $\tilde{g}$; in particular,
	\begin{equation}\label{main-e12}
		T_t \tilde{g}(x) = e^{\alpha t} \tilde{g}(x), \quad x \in \mbb{R},
	\end{equation}
    and $A\tilde{g} = \alpha \tilde{g}$. Pick $t>0$ such that the transition density $p_t$ satisfies assumption (a) or (b). Then \eqref{main-e12} can be seen as a convolution equality $\int \tilde{g}(x+y) \, \varrho_t(dy)=\tilde{g}(x)$ with the measure $\varrho_t(dy) = e^{-\alpha t} p_t(y) \, dy$ which has full support in $\mbb{R}$ under the assumption (a), resp.\ in $(0,\infty)$ under the assumption (b).

    Clearly, the equation $\int_{\mbb{R}} e^{\lambda x} \, \varrho_t(dx)=1$ can be rewritten in the form $M(\lambda):=\mbb{E}e^{\lambda X_t} = e^{\alpha t}$. Since we have assumed that $\Ee e^{\kappa |X_t|}<\infty$, it is easy to see that $M''(\lambda) = \Ee\left[ X_t^2 e^{\lambda X_t}\right] > 0$ exists and is strictly positive for all $\lambda\in (-\kappa,\kappa)$. Thus, $M$ is strictly convex on $[-\kappa,\kappa]$, and from this it is clear that $M(\lambda) = e^{\alpha t}$ has one or two solutions if $\min\left\{\Ee e^{-\kappa X_t}, 1, \Ee e^{+\kappa X_t}\right\} \leq e^{\alpha t}\leq \max\left\{\Ee e^{-\kappa X_t}, \Ee e^{+\kappa X_t}\right\}$.\footnote{If $X_t$ has exponential moments of all orders, we can take $\kappa\to\infty$ and we see, letting $\lambda\to\pm\infty$ in
    \begin{gather*}
        \Ee e^{\lambda X_t} = \Ee\left[e^{\lambda X_t}\I_{\{X_t>0\}}\right] + \Ee\left[e^{\lambda X_t}\I_{\{X_t<0\}}\right],
    \end{gather*}
    that in the regime (b) the function $M(\lambda)$ is $U$-shaped with the global minimum $M(0)=1$ at $\lambda = 0$; thus there are always two solutions if $e^{\alpha t}>1$, one solution if $e^{\alpha t}=1$, and no solution otherwise. In the regime (a), $M(\lambda)$ has the limits $0$ and $\infty$ as $\lambda\to\pm\infty$, and there is always exactly one solution.}

    If there are solutions $\lambda_1\neq \lambda_2$ or $\lambda_1=\lambda_2$, then all positive locally integrable solutions satisfying \eqref{main-e12} are of the form
	\begin{equation*}
		\tilde{g}(x) = a e^{\lambda_1 x} + b e^{\lambda_2 x}, \quad x \in \mbb{R},
	\end{equation*}
    for $a,b \geq 0$; otherwise, \eqref{main-e12} has only the trivial solution $\tilde{g}=0$; see the classical works by Choquet \& Deny \cite{choquet60, deny} or the elementary presentation by Ramachandran \& Prakasa Rao \cite{ramachandran} for the case that $\varrho_t$ has full support in $\mbb{R}$ and Lau \& Rao \cite{lau-rao} for the case that $\varrho_t$ is supported in $[0,\infty)$. From the definition of $\varrho_t$, it is immediate that the condition on $\lambda_i$ is equivalent to  $\mbb{E}e^{\lambda_i X_t} = e^{\alpha t}$.
\end{proof}

\begin{ack}
    We are grateful to our colleague David Berger for suggesting to consider the difference $\Delta_y$ in the proof of Theorem~\ref{main-05}; this helped us to remove the condition $\mathbb{E}(X_t)=0$ in an earlier draft of this note. We would like to thank Nick Bingham, Charles Goldie and the anonymous referee for their valuable comments, which improved the presentation of this paper. We are grateful to Lucian Beznea from Bukarest for his help to get a copy of the paper \cite{angel}. Financial support through the joint Polish--German NCN--DFG `Beethoven 3' grant (NCN 2018/31/G/ST1/02252; DFG SCHI 419/11-1) is gratefully acknowledged.
\end{ack}

\end{document}